\newtheorem*{rep@theorem}{\rep@title}
\newcommand{\newreptheorem}[2]{%
\newenvironment{rep#1}[1]{%
 \def\rep@title{#2 \ref{##1}}%
 \begin{rep@theorem}}%
 {\end{rep@theorem}}}
\newtheorem{thm}{Theorem}[section]
\newtheorem{thmx}{Theorem}
\newtheorem{corx}[thmx]{Corollary}
\newtheorem{lem}[thm]{Lemma}
\newtheorem{prop}[thm]{Proposition}
\newtheorem{cor}[thm]{Corollary}
\newtheorem*{thm*}{Theorem}
\newtheorem*{problem*}{Problem}
\newtheorem*{claim*}{Claim}
\theoremstyle{definition}
\newtheorem{defi}[thm]{Definition}
\newtheorem{rem}[thm]{Remark}
\newenvironment{claimproof}[1]{\textit{Proof of Claim. }}{\hfill $\blacksquare$}
\newcommand{\norm}[1] {\| #1 \|}
\newcommand{\abs}[1]{\lvert#1\rvert}
\newcommand{\N}{\mathbb{N}}
\renewcommand{\epsilon}{\varepsilon}
\renewcommand{\phi}{\varphi}
\DeclareMathOperator{\Tr}{Tr}
\DeclareMathOperator{\supp}{supp}
\DeclareFontFamily{U}{mathx}{\hyphenchar\font45}
\DeclareFontShape{U}{mathx}{m}{n}{
      <5> <6> <7> <8> <9> <10>
      <10.95> <12> <14.4> <17.28> <20.74> <24.88>
      mathx10
      }{}
\DeclareSymbolFont{mathx}{U}{mathx}{m}{n}
\DeclareMathAccent{\widecheck}{0}{mathx}{"71}
\DeclareMathAccent{\wideparen}{0}{mathx}{"75}
\numberwithin{equation}{section}
\begin{document}

\selectlanguage{english} 
\date{\today}

\thanks{The first and third authors were partially supported by the DGI-MINECO and European Regional Development Fund, jointly, through the grant MTM2017-83487-P.
	Also, they acknowledge support from the Generalitat de Catalunya through the grant 2017-SGR-1725. The second author was partially supported by Deutsche Forschungsgemeinschaft (SFB 878) and the Alexander von Humboldt Foundation. The third author is also supported by Beatriu de Pinos programme (BP-2017-0079). The fourth author has received funding from the European Research Council (ERC) under the European Union's Horizon 2020 research and innovation programme (grant agreement no. 677120-INDEX)}

\subjclass[2010]{Primary 46L35, 54H20. Secondary 06F20, 19K14.}

\keywords{Almost finite groupoids, strict comparison, Cuntz semigroups.}

\title{Strict comparison for $C^*$-algebras arising from Almost finite groupoids}
\author{Pere Ara}
\address{Departament de Matemàtiques, Universitat Autònoma de Barcelona, 08193 Bellaterra (Barcelona), Spain}
\email{para@mat.uab.cat}

\author{Christian Bönicke}
\address{School of Mathematics and Statistics, University of Glasgow, University Gardens, Glasgow, G12 8QQ, UK}
\email{christian.bonicke@glasgow.ac.uk}

\author{Joan Bosa}
\address{Departament de Matemàtiques, Universitat Autònoma de Barcelona, 08193 Bellaterra (Barcelona), Spain}
\email{jbosa@mat.uab.cat}

\author{Kang Li}
\address{Institute of Mathematics of the Polish Academy of Sciences, \'{S}niadeckich 8, 00-656 Warsaw, Poland}
\email{kli@impan.pl}

\begin{abstract}
	In this paper we show that for an almost finite minimal ample groupoid $G$, its reduced $\mathrm{C}^*$-algebra $C_r^*(G)$ has real rank zero and strict comparison even though $C_r^*(G)$ may \emph{not} be nuclear in general. Moreover, if we further assume $G$ being also second countable and non-elementary, then its Cuntz semigroup ${\rm Cu}(C_r^*(G))$ is almost divisible and ${\rm Cu}(C_r^*(G))$ and ${\rm Cu}(C_r^*(G)\otimes \mathcal{Z})$ are canonically order-isomorphic, where $\mathcal{Z}$ denotes the Jiang-Su algebra.
\end{abstract}

\date{\today}
\maketitle
Almost finiteness for an ample groupoid was introduced by Matui in \cite{MR2876963}. He studied their topological full groups as well as the applications of almost finiteness to the homology of étale groupoids (see \cite{MR3837599} for a survey of results in this direction). In \cite{1710.00393}, David Kerr specialised to almost finite group actions and treated them as a topological analogue of probability measure preserving hyperfinite equivalence relations, with the ultimate goal of transferring ideas from the classification of equivalence relations and von Neumann algebras to the world of (amenable) topological dynamics and $\mathrm{C}^*$-algebras.

Recently, the classification program for $\mathrm{C}^*$-algebras has culminated in the outstanding theorem that all separable, simple, unital, nuclear, $\mathcal{Z}$-stable $\mathrm{C}^*$-algebras satisfying the universal coefficient theorem (UCT) are classified by their Elliott-invariant (see \cite[Corollary~D]{MR3583354} and \cite[Corollary~D]{1901.05853}). Recall that a $C^*$-algebra is \emph{$\mathcal{Z}$-stable} if $A\otimes \mathcal{Z} \cong A$, where $\mathcal{Z}$ denotes the so-called Jiang-Su algebra. By the Toms-Winter conjecture  $\mathcal{Z}$-stability is conjecturally equivalent to strict comparison (or equivalently, almost unperforation of the Cuntz semigroup) for separable, simple, nuclear, non-elementary $C^*$-algebras. It is known that $\mathcal{Z}$-stability implies strict comparison in general and the converse is indeed the last remaining open step in the Toms-Winter conjecture (see \cite{WinterICM} for an overview and \cite{1912.04207} for the state of the art for the conjecture).

Going back to topological dynamics, David Kerr's approach in \cite{1710.00393} has seen dramatic success. He was able to show that a crossed product $C(X)\rtimes \Gamma$ associated to a free and minimal action of an (amenable) infinite group $\Gamma$ is $\mathcal{Z}$-stable provided that the action is almost finite (see \cite[Theorem~12.4]{1710.00393}). Combining this with the recent 
result in \cite[Theorem~8.1]{KerrSzabo20}, which states that every free action of a countably infinite (amenable) group with subexponential growth on a compact metrizable space with finite covering dimension is almost finite, we get a huge supply of classifiable $\mathrm{C}^*$-algebras arising from topological 
dynamics (see \cite[Theorem~8.2]{KerrSzabo20}). On the other hand, important results by Kumjian \cite{MR854149} and Reanult \cite{MR2460017} show that (twist\-ed) étale groupoids play a role in $\mathrm{C}^*$-algebras similar to the role of probability measure preserving equivalence relations play in the theory of von Neumann algebras. Moreover, Xin Li proved 
in \cite{XinLi20} that every separable, simple, unital, nuclear, $\mathcal{Z}$-stable $\mathrm{C}^*$-algebra satisfying the (UCT) has a twisted étale groupoid model. Consequently, we are led to study $\mathcal{Z}$-stability and strict comparison of groupoid $\mathrm{C}^*$-algebras.

In this article we take a step in this direction by considering the case of étale groupoids with a zero-dimensional compact unit space. Indeed, we take a slightly different route than Kerr and verify the last condition in the Toms-Winter conjecture for $C^*$-algebras arising from almost finite groupoids:

\begin{thmx}\label{TheoremA}
Let $G$ be an almost finite minimal ample groupoid with compact unit space. Then its reduced groupoid $C^*$-algebra $C_r^*(G)$ has strict comparison and real rank zero. In particular, the Cuntz semigroup ${\rm Cu}(C_r^*(G))$ is almost unperforated.
	
	If we furthermore assume that $G$ is second-countable and non-elementary\footnote{That is to say $G\ncong \mathcal{R}_n$ for any $n\in \N$, where $\mathcal{R}_n$ is the discrete full equivalence relation on $\{1,\ldots, n\}$.}, then ${\rm Cu}(C_r^*(G))$ is almost divisible and ${\rm Cu}(C_r^*(G))\cong {\rm Cu}(C_r^*(G)\otimes \mathcal{Z})$ order-isomorphic via the first factor embedding.
\end{thmx}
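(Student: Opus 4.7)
The plan is to extract a dynamical comparison statement from almost finiteness and then transfer it, via the zero-dimensional structure of $G^{(0)}$, to strict comparison in $C_r^*(G)$. Almost finiteness provides, for every $\varepsilon>0$ and every finite collection of compact open bisections, a castle whose levels cover $G^{(0)}$ up to a subset of $\mu$-measure $<\varepsilon$ for every $G$-invariant Borel probability measure $\mu$. The first and key step is to prove a groupoid-theoretic comparison property: whenever $U,V\subseteq G^{(0)}$ are clopen with $\mu(U)<\mu(V)$ uniformly in $\mu$, $U$ is decomposable into finitely many clopen pieces that can be moved by compact open bisections into disjoint clopen subsets of $V$. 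This is achieved by partitioning $U$ along the bottoms of the columns of a sufficiently fine castle and using a Hall-type marriage argument, in the spirit of Kerr's work for group actions, to pair these pieces with disjoint column subsets inside $V$. I expect this to be the main technical obstacle, because unlike the transformation-group case there is no global acting group, only local moves coming from bisections, so the almost-finite structure has to be leveraged carefully to globalise the matching.

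Once this groupoid comparison is in hand, real rank zero follows by a standard argument. Projections from $C(G^{(0)})$ are plentiful because $G^{(0)}$ is totally disconnected, and the castle decomposition produces a sub-$C^*$-algebra that is close to a finite direct sum of matrix algebras over $C(G^{(0)})$; combining this approximation with groupoid comparison allows us to cut any self-adjoint element into finite-spectrum pieces modulo an $\varepsilon$-error.

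For strict comparison, real rank zero together with simplicity (which follows from minimality plus the effectiveness implicit in almost finiteness) implies that every positive element in $C_r^*(G)\otimes\mathcal{K}$ is Cuntz-approximable by projections. The (quasi)tracial functionals on $C_r^*(G)$ are naturally identified with $G$-invariant probability measures on the unit space, so a Cuntz-tracial inequality $d_\tau(p)<d_\tau(q)$ becomes precisely the measure-theoretic hypothesis of the first step, yielding $[p]\leq[q]$ in $\mathrm{Cu}(C_r^*(G))$. Almost unperforation then follows because $C_r^*(G)$ is simple and stably finite.

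For the second half, the same castle machinery yields almost divisibility: by refining any castle into an $n$-fold subdivision of each column, one can split every given projection into $n$ Cuntz-equivalent pieces modulo an arbitrarily small invariant-measure remainder, which at the level of the Cuntz semigroup produces almost $n$-divisibility. Under second countability, which gives separability of $C_r^*(G)$, the combination of almost unperforation and almost divisibility forces the canonical first-factor embedding $\mathrm{Cu}(C_r^*(G))\to\mathrm{Cu}(C_r^*(G)\otimes\mathcal{Z})$ to be an order-isomorphism, by general results on Cuntz semigroups due to Antoine--Perera--Robert--Thiel. The non-elementarity hypothesis excludes the trivial finite-dimensional case where $G^{(0)}$ is finite and no such divisibility is possible.
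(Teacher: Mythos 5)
There is a genuine gap, and it sits exactly where your proposal is most confident. You locate the main difficulty in the dynamical comparison statement for clopen subsets of $G^{(0)}$ (if $\mu(U)<\mu(V)$ for all invariant $\mu$ then $U$ is dynamically subequivalent to $V$); in fact this is a known preliminary that the paper simply cites, and it is \emph{not} where the work lies. The real obstacle, which your sketch elides, is the passage from comparison of clopen subsets of the unit space to comparison of arbitrary projections in $M_\infty(C_r^*(G))$, and from there to comparison of arbitrary positive elements. A projection in $M_\infty(C_r^*(G))$ is not a characteristic function of a clopen set, so the statement ``$d_\tau(p)<d_\tau(q)$ becomes precisely the measure-theoretic hypothesis of the first step'' does not parse as written. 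The paper bridges this gap with a chain of lemmas that use almost finiteness in an essential, quantitative way: one first shows that every tracial state factors through the conditional expectation onto $C(G^{(0)})$ (this needs proof, since $G$ is not assumed principal or even amenable), then that any finite set of projections can be approximated, up to arbitrarily small loss of trace and with Cuntz subequivalence preserved, by projections lying in $C_r^*(K)$ for an elementary subgroupoid $K$ --- using that $C_r^*(K)$ is AF --- and finally that projections in $C_r^*(K)$ are Murray--von Neumann equivalent to projections in $C(K^{(0)})=C(G^{(0)})$. Only after this reduction does the clopen-set comparison apply. None of these steps appears in your outline, and without them the argument does not close.

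A second omission is stable rank one. The paper's route from the $K_0$-positivity statement ($\tau_*(x)>0$ for all $\tau$ implies $x$ is the class of a projection) to actual Murray--von Neumann subequivalence of projections requires cancellation, which is supplied by Suzuki's theorem that $C_r^*(G)$ has stable rank one; the same property underlies the identification of $\mathrm{Cu}(C_r^*(G))$ with countably generated intervals in $V(C_r^*(G))$, which is how the paper promotes comparison of projections to strict comparison in the presence of real rank zero. Your appeal to ``real rank zero plus simplicity implies positive elements are Cuntz-approximable by projections'' captures half of this but silently assumes the cancellation needed to compare the approximating projections. Relatedly, your real-rank-zero argument (``cut any self-adjoint element into finite-spectrum pieces modulo $\varepsilon$'') is too coarse: the actual proof is a Phillips-style argument needing both an ``enough room'' lemma (producing many orthogonal copies of the small corner projection) and the $K_0$-positivity theorem, so real rank zero cannot be obtained independently of, and prior to, the projection-comparison machinery in the way your ordering suggests. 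The final step, deducing almost divisibility and the $\mathcal{Z}$-stabilization isomorphism of Cuntz semigroups from separability, simplicity, non-elementarity and stable rank one via the Antoine--Perera--Robert--Thiel results, is essentially what the paper does, so that part of your plan is sound once the earlier gaps are filled.
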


The class of groupoids (and their C*-algebras) under study in Theorem \ref{TheoremA} may have bizarre properties. Indeed, part of the novelty of this result is that it holds even for non-separable and non-nuclear C*-algebras. For instance, Gabor Elek constructed in \cite[Theorem~6]{1812.07511} a non-amenable minimal almost finite ample groupoid $G$ so that $C_r^*(G)$ is not nuclear. In addition, we show in Remark~\ref{non-exact} that the $C^*$-algebra $C_r^*(G)$ of an almost finite ample groupoid may not even be exact. 

In \cite{Suzuki} Suzuki develops a new strategy which in essence  
is a local version of Phillips' large subgroupoid technique (\cite{MR2134336}).
Using this method, he is able to verify that almost finite minimal groupoid  
C*-algebras have stable rank 1. Moreover, in \cite[Remark~4.3]{Suzuki} he claims that a suitably adapted strategy indeed also yields real rank zero and strict comparison for such groupoid $C^*$-algebras. In this note we carry out all the necessary intermediate steps (some  
of which might be of independent interest) in detail, as we believe
concrete proofs of these facts would be a useful contribution to the  
literature.

As mentioned before, Toms-Winter conjecture predicts that $C_r^*(G)$ in the above theorem should be $\mathcal{Z}$-stable, provided that $G$ is also assumed to be amenable. Combining Theorem~\ref{TheoremA} and Proposition~\ref{Prop:One-to-one Measures Traces} with the main theorems in \cite{MR3276157, MR3352253, 1209.3000, BBSTWW19}, we obtain the following:
\begin{corx}

Let $G$ be an amenable minimal second-countable non-elementary almost finite ample groupoid with compact unit space. Let $M(G)$ be the compact convex set of invariant positive regular Borel probability measures on $G^{(0)}$.

If the extremal boundary of $M(G)$ is compact and finite-dimensional in the weak$^*$-topology, then $C_r^*(G)$ is a separable simple unital nuclear $\mathcal{Z}$-stable $C^*$-algebra. 
\end{corx}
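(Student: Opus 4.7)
The plan is to verify the hypotheses of one of the Toms--Winter-type $\mathcal{Z}$-stability theorems packaged in \cite{MR3276157, MR3352253, 1209.3000, BBSTWW19}, all of which assert that a separable, simple, unital, nuclear, stably finite $C^*$-algebra with strict comparison whose tracial state space has compact finite-dimensional extreme boundary is $\mathcal{Z}$-stable. Writing $A := C_r^*(G)$, I would first check the structural hypotheses one by one: separability of $A$ follows from second countability of $G$; unitality from compactness of $G^{(0)}$; nuclearity from amenability of $G$ (which also forces $C^*(G) = C_r^*(G)$). For simplicity, I would invoke the standard Renault-type criterion, using minimality of $G$ together with the fact that almost finite ample groupoids are principal. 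Stable finiteness is automatic once one exhibits a faithful trace, which is supplied by any element of $M(G)$.

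Next, I would apply Theorem~\ref{TheoremA} to deduce that $A$ has strict comparison (equivalently, almost unperforation of $\operatorname{Cu}(A)$), and Proposition~\ref{Prop:One-to-one Measures Traces} to obtain a canonical affine homeomorphism $T(A) \cong M(G)$. Under this identification the assumption that $\partial_e M(G)$ is compact and finite-dimensional in the weak$^*$-topology transfers verbatim to $\partial_e T(A)$, so every condition required by the cited $\mathcal{Z}$-stability theorem is in place, and the conclusion follows at once.

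The main obstacle I expect is not mathematical depth but precise bookkeeping: being careful to match the hypotheses (compact and finite-dimensional extreme boundary) to the exact formulation in whichever of the four cited sources is the tightest fit; \cite{BBSTWW19} is the most recent and likely the directly applicable one. The genuine mathematical content sits entirely inside Theorem~\ref{TheoremA} and inside the quoted $\mathcal{Z}$-stability theorems, and the corollary itself is an assembly step on top of them.
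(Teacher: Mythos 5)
Your proposal is correct and follows exactly the route the paper intends (the paper gives no written proof, only the sentence ``Combining Theorem~\ref{TheoremA} and Proposition~\ref{Prop:One-to-one Measures Traces} with the main theorems in the cited references''), namely: assemble separability, unitality, nuclearity, simplicity and stable finiteness from the groupoid hypotheses, feed strict comparison from Theorem~\ref{TheoremA} and the identification $T(C_r^*(G))\cong M(G)$ from Proposition~\ref{Prop:One-to-one Measures Traces} into the quoted $\mathcal{Z}$-stability theorems. The only slip is that minimal almost finite groupoids are \emph{topologically} principal rather than principal (\cite[Lemma 3.10]{Suzuki}), but that is precisely what the Renault-type simplicity criterion requires, so the argument is unaffected.
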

\begin{rem}
In a very recent article Castillejos, Evington, Tikuisis and White proved that the Toms-Winter conjecture holds among separable simple nuclear, non-elementary $C^*$-algebras which have uniform property $\Gamma$ (see \cite[Definition~2.1 and Theorem~A]{1912.04207}). Let $G$ be a non-elementary, minimal, amenable, second-countable étale groupoid with compact unit space such that the extremal boundary of $M(G)$ is compact and finite-dimensional in the weak$^*$-topology, then the reduced groupoid $C^*$-algebra $C_r^*(G)$ has uniform property $\Gamma$ if $G$ is either principal by \cite[Proposition~5.7]{1912.04207} and \cite[Lemma~4.3]{1703.10505} or almost finite by Proposition~\ref{Prop:One-to-one Measures Traces}.
\end{rem}
\textbf{Throughout the paper, all groupoids are assumed to be locally compact, Hausdorff, and their unit spaces are assumed to be compact and totally disconnected.}

\section{Preliminaries}
In this first section, we will recall some background about both ${C}^*$-algebras and groupoids. We encourage the reader to look at \cite{ABBL} for further details about these topics.

\subsection{The Cuntz semigroup and Murray-von Neumann semigroup}
Let $A$ be a $C^*$-algebra and let $\mathcal{K}$ denote the algebra of compact operators on a separable infinite-dimensional Hilbert space. Let $(A\otimes \mathcal{K})_+$ denote the set of positive elements in $A\otimes \mathcal{K}$. Given $a,b\in (A\otimes \mathcal{K})_+$, we say that $a$ is \emph{Cuntz subequivalent} to $b$ (in symbols $a\precsim b$), if there is a sequence $(v_n)$ in $A\otimes \mathcal{K}$ such that $a=\lim_n v_n bv_n^*$. We say that $a$ and $b$ are \emph{Cuntz equivalent} (in symbols $a\sim b$), if both $a\precsim b$ and $b\precsim a$. The relation $\precsim$ is clearly transitive and reflexive and $\sim$ is an equivalence relation on $(A\otimes \mathcal{K})_+$.

We define the \emph{Cuntz semigroup} of a $C^*$-algebra $A$ to be ${\rm Cu}(A)=(A\otimes \mathcal{K})_+/\sim$, and the equivalence class of $a \in (A\otimes \mathcal{K})_+$ in ${\rm Cu}(A)$ is denoted by $\langle a \rangle$. In particular,  ${\rm Cu}(A)$ is a partially ordered abelian semigroup equipped with order and addition  as:
\begin{align*}
\langle a \rangle \leq \langle b \rangle \Leftrightarrow a\precsim b,
\hspace{1,5cm}\langle a \rangle + \langle b \rangle= \langle a\oplus b \rangle,
\end{align*}
using a suitable isomorphism between $M_2(\mathcal{K})$ and $\mathcal{K}$. 

Similarly, the \emph{Murray-von Neumann semigroup} $V(A)$ of a $C^*$-algebra $A$ is defined as the set of Murray-von Neumann equivalence classes of projections in $(A\otimes \mathcal{K})$. Recall that for $p$ and $q$ projections in $(A\otimes \mathcal{K})$, we say that $p$ and $q$ are \emph{Murray-von Neumann equivalent} if there exists $v\in (A\otimes \mathcal{K})$ with $p=vv^*$ and $q=v^*v$. The class of a projection $p\in (A\otimes \mathcal{K})$ in $V(A)$ is denoted by $[p]$. We also say that $p$ is \emph{Murray-von Neumann subequivalent} to $q$ if $p$ is Murray-von Neumann equivalent to a subprojection of $q$. 
It is worth mentioning that when $A$ is a stably finite $C^*$-algebra, the natural map $V(A)\rightarrow {\rm Cu}(A)$ given by $[p]\mapsto  \langle p \rangle$ is an injective order-embedding. 
In this article we are only concerned with stably finite $C^*$-algebras. Hence, we will use this order-embedding without further mention.  
We encourage the readers to look at \cite{APT11} for further details.

\subsection{Strict comparison}
Let $T(A)$ be the tracial state space of a $C^*$-algebra $A$. Given $\tau\in T(A)$, there is a canonical extension of $\tau$ to a trace $\tau_\infty:(A\otimes \mathcal{K})_+\rightarrow [0,\infty]$. 
Abusing notation, we usually denote $\tau_\infty$ by $\tau$. The induced \emph{lower semicontinuous dimension function} $d_\tau:(A\otimes \mathcal{K})_+\rightarrow [0,\infty]$ is given by
$$
d_\tau(a):=\lim_n\tau(a^\frac{1}{n}),
$$
for $a\in (A\otimes \mathcal{K})_+$.

If $a,b\in (A\otimes \mathcal{K})_+$ satisfy $a\precsim b$, then $d_\tau(a)\leq d_\tau(b)$. Therefore, $d_\tau$ induces a well-defined, order-preserving map ${\rm Cu}(A) \rightarrow [0,\infty]$, which we also denote by $d_\tau$.

\begin{defi}
Let $A$ be a unital simple $C^*$-algebra. We say that $A$ has \emph{strict comparison} (with respect to tracial states) if for all $a,b\in (A\otimes \mathcal{K})_+$ we have $a\precsim b$ whenever $d_\tau(a)<d_\tau(b)$ for all $\tau \in T(A)$.
 \end{defi}
If a unital simple $C^*$-algebra $A$ has strict comparison (with respect to tracial states), then its Cuntz semigroup ${\rm Cu}(A)$ is \emph{almost unperforated} in the sense that whenever $\langle a \rangle, \langle b \rangle\in {\rm Cu}(A)$ satisfy $(k+1)\langle a \rangle\leq k \langle b \rangle$ for some $k\in \N$, it follows that $\langle a \rangle \leq \langle b \rangle$. If $A$ is an exact $C^*$-algebra, then every finite-valued 2-quasitrace on $A$ is a trace (see \cite{MR3241179}). Hence, the converse implication holds for all unital simple exact $C^*$-algebras (see \cite[Remark~9.2. (3)]{1711.04721}).



\subsection{Groupoids}
Given a groupoid $G$ we usually denote its unit space by $G^{(0)}$ and write $r,s:G\rightarrow G^{(0)}$ for the range and source maps, respectively. In this paper, we will only consider groupoids equipped 
with a locally compact, Hausdorff topology making all the structure maps continuous. A groupoid $G$ is called \textit{étale} if the range map, regarded as a map $r:G\rightarrow G$, is a local homeomorphism, and it is called \textit{ample} if additionally, the unit space $G^{(0)}$ is totally disconnected. Moreover, a subset $V\subseteq G$ is called \textit{bisection} if the restrictions of the source and range maps to $V$ are homeomorphisms onto their respective images. Recall that every ample groupoid $G$ admits a basis for its topology consisting of compact and open bisections.

The product of two subsets $A,B\subseteq G$ in G is given by
$$AB=\lbrace ab\in G\mid a\in A, b\in B, s(a)=r(b)\rbrace .$$ 
Whenever $B=\lbrace x\rbrace$ for a single element $x\in G^{(0)}$, we will omit the braces and just write $Ax$.

For a subset $D\subseteq G^{(0)}$, we say that the set $D$ is \textit{$G$-invariant} if for every $g\in G$ we have $r(g)\in D \Leftrightarrow s(g)\in D$, and we say that $D$ is $G${\it -full} if it satisfies that $r(GD)=G^{(0)}$. Related to that, we say that a groupoid $G$ is \textit{minimal} if there are no proper non-trivial closed $G$-invariant subsets of $G^{(0)}$. Moreover, a Borel measure $\mu$ on $G^{(0)}$ is called invariant if $\mu(s(V))=\mu(r(V))$ for every open bisection $V \subseteq G$; we will denote by $M (G)$
the compact (in the weak$^*$-topology) convex set of invariant positive regular Borel probability measures on $G^{(0)}$.

The isotropy groupoid of $G$ is the subgroupoid $Iso(G)=\lbrace g\in G\mid s(g)=r(g)\rbrace$, and we say that $G$ is \textit{principal} if $Iso(G)=G^{(0)}$. We say that $G$ is {\it topologically principal} if the set of points of $G^{(0)}$ with trivial isotropy group is dense in $G^{(0)}$.
  
  Let us finish this subsection by recalling that the reduced $C^*$-algebra associated to an étale groupoid $G$, denoted by  $C^*_r(G)$, is the completion of $C_c(G)$ by the norm coming from a single canonical regular representation of $C_c(G)$ on a Hilbert module over $C_0(G^{(0)})$ (see \cite{Re80} for further details).

\subsection{Almost finiteness} In this subsection, we recall the definition of almost finiteness and state some known properties for almost finite groupoids. 
\begin{defi}{\cite[Definition~6.2]{MR2876963}}\label{Def:AlmostFinite} Let $G$ be an ample groupoid with compact unit space.
	\begin{enumerate}
		\item We say that $K\subseteq G$ is an \textit{elementary} subgroupoid if it is a compact open principal subgroupoid of $G$ such that $K^{(0)}=G^{(0)}$.
		\item Given a compact subset $C\subseteq G$ and $\varepsilon>0$, a compact subgroupoid $K\subseteq G$ with $K^{(0)}=G^{(0)}$ is called $(C,\varepsilon)$\textit{-invariant}, if for all $x\in G^{(0)}$ we have
		$$\frac{\abs{CKx\setminus Kx}}{\abs{Kx}}<\varepsilon.$$	
		\item We say that $G$ is \textit{almost finite} if for every compact set $C\subseteq G$ and every $\varepsilon>0$, there exists a $(C,\varepsilon)$-invariant elementary subgroupoid $K\subseteq G$.
	\end{enumerate}
\end{defi}
\textbf{Throughout the paper, whenever we say that a groupoid $G$ is almost finite, we also assume that $G$ is an ample groupoid with compact unit space.}

\begin{defi}\cite[Definition 3.2]{Suzuki}\label{def:castle}
	Let $K$ be a compact groupoid. A \textit{clopen castle} for $K$ is a partition $$K^{(0)}=\bigsqcup\limits_{i=1}^n \bigsqcup\limits_{j=1}^{N_i} F_j^{(i)}$$ into non-empty clopen subsets such that the following conditions hold:
	\begin{enumerate}
		\item For each $1\leq i\leq n$ and $1\leq j,k\leq N_i$ there exists a unique compact open bisection $V_{j,k}^{(i)}$ of $K$ such that $s(V_{j,k}^{(i)})=F_k^{(i)}$ and $r(V_{j,k}^{(i)})=F_j^{(i)}$.
		\item $$K=\bigsqcup\limits_{i=1}^n \bigsqcup\limits_{1\leq j,k\leq N_i} V_{j,k}^{(i)}.$$
	\end{enumerate}
	The pair $(F_1^{(i)},\lbrace V_{j,k}^{(i)}\mid 1\leq j,k\leq N_i\rbrace)$ is called the $i$-th tower of the castle and the sets $F_j^{(i)}$ are called the levels of the $i$-th tower.
\end{defi}
\begin{rem}
	Note that the uniqueness of the bisections in $(2)$ above has an important consequence: If $\theta_{j,k}^{(i)}:F_k^{(i)}\rightarrow F_j^{(i)}$ denotes the partial homeomorphism corresponding to the bisection $V_{j,k}^{(i)}$, i.e. $\theta_{j,k}^{(i)}=r\circ (s_{\mid V_{j,k}^{(i)}})^{-1}$, then we have $(\theta_{j,k}^{(i)})^{-1}=\theta_{k,j}^{(i)}$,  $\theta_{j,k}^{(i)}\circ \theta_{k,l}^{(i)}=\theta_{j,l}^{(i)}$, and $\theta_{j,j}^{(i)}=id_{F_j^{(i)}}$.
\end{rem}

As already mentioned in \cite{Suzuki}, every compact ample principal groupoid always admits a clopen castle by \cite[Lemma~4.7]{MR2876963}. It follows that Definition~\ref{Def:AlmostFinite} is equivalent to the definition of almost finiteness given in \cite[Definition 3.6]{Suzuki} by Suzuki. Due to this fact, we will be using both equivalent notions of almost finiteness without further notice.

Finally, let us list some facts about almost finite groupoids that will be used in the sequel:

\begin{enumerate}
	\item If $G$ is an almost finite groupoid, it follows that $M(G)\neq\emptyset$ by \cite[Lemma~3.9]{Suzuki}. In particular, its extreme boundary $\partial_e M(G)$ is non-empty as well. 
	\item If $G$ is almost finite and minimal, then $G$ is topologically principal by \cite[Lemma 3.10]{Suzuki}. 
	\item Let $G$ be an almost finite groupoid and $A,B$ be compact open subsets of $G^{(0)}$. If $\mu(A)<\mu(B)$ for all $\mu \in M(G)$, then $A\precsim B$ by \cite[Lemma~3.7]{ABBL}, where $A\precsim B$ means $A$ is \textit{dynamically subequivalent} to $B$ in the sense that there exist finitely many compact open bisections $V_1 , \ldots , V_n$ of $G$ such that $A =\cup_{i=1}^n s(V_i)$ and the sets $\{r(V_i)\}_{i=1}^n$ are pairwise disjoint subsets of $B$. In particular, $1_A$ is Murray-von Neumann subequivalent to $1_B$ in $C_r^*(G)$, where $1_A$ denotes the characteristic function with support $A$.
\end{enumerate}
\section{$\mathrm{C}^*$-algebras of almost finite groupoids} 
This section is the main part of the paper. Here we verify two important facts mentioned without proof in \cite[Remark 4.3]{Suzuki} by Suzuki: $\mathrm{C}^*$-algebras of minimal almost finite groupoids have real rank zero and strict comparison. These are build upon local versions of
 results in \cite{MR2134336}, but there are some subtle differences which we expose below. 

 Let us begin by identifying the tracial states on $C_r^*(G)$, which might be of independent interest.
 It is well-known that for a principal \'etale groupoid $G$, then the tracial states on $C_r^*(G)$ are in a one-to-one correspondence with the invariant probability measures on $G^{(0)}$.
 Since an almost finite groupoid $G$ is in some sense locally approximated by principal groupoids, it might not come as a surprise that the one-to-one correspondence persists in this more generenal setting.

\begin{lem}\label{Lemma:TracesDeterminedByRestriction}
	Let $G$ be an almost finite groupoid and $\tau$ be a tracial state on $C_r^*(G)$. Then  $$\tau=\tau_{\mid C(G^{(0)})}\circ E,$$ where $E:C_r^*(G)\rightarrow C(G^{(0)})$ is the canonical conditional expectation.
\end{lem}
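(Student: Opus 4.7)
The plan is to reduce to showing $\tau(1_V) = 0$ for every compact open bisection $V$ disjoint from $G^{(0)}$, and then to exploit a $(V,\epsilon)$-invariant elementary subgroupoid $K$ from almost finiteness to split $V = (V \cap K) \sqcup (V \setminus K)$ into a ``principal'' part, on which the trace vanishes by the standard matricial argument, plus a ``boundary'' part whose mass is controlled by the invariance parameter $\epsilon$.

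Since $G$ is ample, $C_c(G)$ is linearly spanned by indicators of compact open bisections, which is dense in $C_r^*(G)$. For such $V$ the decomposition $V = (V \cap G^{(0)}) \sqcup (V \setminus G^{(0)})$ gives $E(1_V) = 1_{V \cap G^{(0)}}$, so continuity and linearity of $\tau$ and $\tau \circ E$ let me reduce to the claim above. Along the way I would note that $\mu := \tau|_{C(G^{(0)})}$ is an invariant probability measure: traciality applied to the partial isometry $1_W$ yields $\mu(s(W)) = \tau(1_W^*1_W) = \tau(1_W 1_W^*) = \mu(r(W))$ for every compact open bisection $W$. The standard trace inequality $|\tau(u)| \leq \tau(|u|)$ applied to $u = 1_V$ then gives $|\tau(1_V)| \leq \mu(s(V))$, a bound I will apply to the boundary piece.

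Fix $\epsilon > 0$ and choose a $(V,\epsilon)$-invariant elementary subgroupoid $K$ with clopen castle $\{(F_j^{(i)}, V_{j,k}^{(i)})\}$. Since $K$ is compact open, both $V \cap K$ and $V \setminus K$ are compact open bisections and $1_V = 1_{V \cap K} + 1_{V \setminus K}$. The piece inside $K$ further decomposes as $V \cap K = \bigsqcup_{i,\, j \neq k}(V \cap V_{j,k}^{(i)})$, because $V \cap V_{j,j}^{(i)} \subseteq V \cap G^{(0)} = \emptyset$. For each non-empty piece $W = V \cap V_{j,k}^{(i)}$ with $j \neq k$, the projections $1_{s(W)}$ and $1_{r(W)}$ are orthogonal (as $F_j^{(i)} \cap F_k^{(i)} = \emptyset$), so $1_W 1_{r(W)} = 1_W 1_{s(W)} 1_{r(W)} = 0$, whence $\tau(1_W) = \tau(1_{r(W)} 1_W) = \tau(1_W 1_{r(W)}) = 0$ by traciality. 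Summing gives $\tau(1_{V \cap K}) = 0$.

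For the remainder, the goal is $\mu(s(V \setminus K)) < \epsilon$. The key observation is that since $K$ is a subgroupoid, for $k \in K$ and $v \in V$ with $s(v) = r(k)$, the product $vk$ lies in $K$ if and only if $v$ itself does. Consequently, for $x \in F_{j_0}^{(i_0)}$ the count $b(x) := |\{j' : \theta_{j',j_0}^{(i_0)}(x) \in s(V\setminus K)\}|$ equals $|VKx \setminus Kx|$, which is strictly less than $\epsilon|Kx| = \epsilon N_{i_0}$ by the $(V,\epsilon)$-invariance. Integrating against $\mu$ and using the $K$-invariance to rewrite $\mu(\theta_{j_0,j'}^{(i_0)}(s(V \setminus K) \cap F_{j'}^{(i_0)})) = \mu(s(V \setminus K) \cap F_{j'}^{(i_0)})$, a telescoping computation yields
\[
\mu(s(V \setminus K)) \;=\; \int_{G^{(0)}} \frac{b(x)}{|Kx|}\, d\mu(x) \;<\; \epsilon.
\]
Hence $|\tau(1_V)| = |\tau(1_{V \setminus K})| \leq \mu(s(V \setminus K)) < \epsilon$, and letting $\epsilon \to 0$ gives $\tau(1_V) = 0$. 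The main subtlety I anticipate is this level-wise bookkeeping: one must ensure each point of $s(V \setminus K)$ is counted exactly once on both sides, which crucially uses principality of $K$ (so that the castle genuinely partitions $K$) together with the bisection property of $V$.
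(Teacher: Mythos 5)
Your proof is correct, and although it shares the paper's basic skeleton --- use almost finiteness to produce an elementary subgroupoid $K$, force the value of the trace on the part of an element supported in $K$, and control the leftover piece by the invariance parameter --- the execution is genuinely different and more self-contained. The paper tests the two functionals on elements $f^*f$ with $f\in C_c(G)$, sandwiches with the projection $p=\chi_{r(V\setminus K)}$ so that $f^*f-pf^*fp\in C_r^*(K)$, and then cites two external facts: that traces on $C_r^*(K)$ for principal ample $K$ are determined by their restriction to $C(K^{(0)})$, and Suzuki's Lemma~3.8 for the estimate $\tau(p)<\varepsilon/2$. You instead reduce, via ampleness, to indicators $1_V$ of compact open bisections disjoint from $G^{(0)}$, split $1_V=1_{V\cap K}+1_{V\setminus K}$, kill the first summand directly from the castle decomposition and traciality (using $\tau(1_W)=\tau(1_{r(W)}1_W)=\tau(1_W 1_{r(W)})=0$ when $s(W)$ and $r(W)$ lie in distinct levels), and reprove the measure estimate $\mu(s(V\setminus K))<\varepsilon$ from the averaging identity over towers; the price of avoiding all external citations is that you must verify these two sublemmas by hand, which you essentially do. The one step you flag as delicate --- that $b(x)=\lvert VKx\setminus Kx\rvert$ --- does go through: since $K$ is a subgroupoid one has $VKx\setminus Kx=(V\setminus K)Kx$, every element of which is $vk$ with $k\in Kx$, $r(k)=s(v)\in s(V\setminus K)\cap F_{j'}^{(i_0)}$ for a unique level $j'$; and if $vk=v'k'$ then taking ranges gives $r(v)=r(v')$, the bisection property of $V$ forces $v=v'$, hence $s(v)=s(v')$ lies in a single level, so $j'=j''$ and $k=k'$. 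Two cosmetic remarks: the bound $\abs{\tau(u)}\leq\tau(\abs{u})$ is legitimate here but, for the scaled partial isometry $u=1_{V\setminus K}$, the cruder Cauchy--Schwarz bound $\abs{\tau(u)}\leq\tau(uu^*)^{1/2}=\mu(r(V\setminus K))^{1/2}$ would already suffice since it still tends to $0$; and $V\setminus G^{(0)}$ is indeed a compact open bisection because $G^{(0)}$ is clopen in the Hausdorff \'etale groupoid $G$, which is the point making your initial reduction legal.
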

\begin{proof}
	For convenience let $\tau':=\tau_{\mid C(G^{(0)})}\circ E$. It is enough to show that for every fixed $f\in C_c(G)$, we have $\abs{\tau(f^*f)-\tau'(f^*f)}<\varepsilon$ for any $\varepsilon>0$, since the linear span of elements of the form $f^*f$ is dense in $C_r^*(G)$.
	We may assume that $\norm{f}\leq 1$ as well. As $supp(f^*f)$ is compact we can find compact open bisections $V_1,\ldots,V_N$ in $G$ such that $supp(f^*f)\subseteq \bigcup_{i=1}^N V_i$. Let $V$ be the (compact and open) union of the $V_i$.
	Applying almost finiteness of $G$ now, we can find a $(V\cup V^{-1},\frac{\varepsilon}{2N})$-invariant elementary subgroupoid $K$ of $G$. Clearly, $K$ is also $(V_i\cup V_i^{-1},\frac{\varepsilon}{2N})$ invariant for every $1\leq i\leq N$.
	The restrictions of $\tau$ and $\tau'$ to the subalgebra $C(G^{(0)})$ define the same $G$-invariant probability measure $\mu\in M(G)$. Since $K$ is compact open in $G$ with $K^{(0)}=G^{(0)}$, we can also view $\mu$ as an element in $M(K)$.
	By \cite[Lemma~3.8]{Suzuki} we have $\abs{\mu(r(V_i\setminus K))}<\frac{\varepsilon}{2N}$ for every $1\leq i\leq N$. Hence, we get
	$$\abs{\mu(r(V\setminus K))}\leq \sum\limits_{i=1}^N \abs{\mu(r(V_i\setminus K))}<\frac{\varepsilon}{2}.$$
	In other words, if $p:=\chi_{r(V\setminus K)}$ denotes the characteristic function of $r(V\setminus K)$, then 
	$$\tau(p)=\tau'(p)<\frac{\varepsilon}{2}.$$
	We can now follow the arguments in \cite[Lemma~2.10]{MR2134336} to get the result. For the convenience of the reader we reproduce the argument here:
	First, note that from $((1-p)f^*f)(g)=(1-p)(r(g))f^*f(g)$ and the definition of $p$, it follows that $(1-p)f^*f\in C(K)$. By taking the adjoint, we also get $f^*f(1-p)\in C(K)$. Since $p\in C(K)$, it follows that
	$$f^*f-pf^*fp=(1-p)f^*f+pf^*f(1-p)\in C(K).$$
Since $K$ is a principal groupoid, $\tau$ and $\tau'$ coincide on the $\mathrm{C}^*$-subalgebra $C_r^*(K)\subseteq C_r^*(G)$ (see for example \cite[Lemma~4.3]{1703.10505}). In particular, we get
	$$\tau(f^*f-pf^*fp)=\tau'(f^*f-pf^*fp).$$
	On the other hand, it follows from $pf^*fp\leq \norm{f}^2p\leq p$, that we have
	$0\leq \tau(pf^*fp)\leq \tau(p)<\frac{\varepsilon}{2}$ and similarly 
	$0\leq \tau'(pf^*fp)\leq \tau'(p)<\frac{\varepsilon}{2}$.
	Combining these facts we arrive at
	$$\abs{\tau(f^*f)-\tau'(f^*f)}=\abs{\tau(pf^*fp)-\tau'(pf^*fp)}<\varepsilon,$$
	as desired.
\end{proof}
Recall that $M(G)$ denotes the compact (in the weak$^*$-topology) convex set of invariant positive regular Borel probability measures on $G^{(0)}$, and $T(C_r^*(G))$ denotes the tracial state space of $C_r^*(G)$.

\begin{prop}\label{Prop:One-to-one Measures Traces}
	Let $G$ be an almost finite groupoid. Then the canonical map $T(C_r^*(G))\rightarrow M(G)$ is an affine homeomorphism. In particular, we can also identify their extreme boundaries $\partial_eT(C_r^*(G))=\partial_e M(G)$, which are non-empty.
\end{prop}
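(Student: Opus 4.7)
The plan is to exhibit the affine homeomorphism as the restriction map $\Phi\colon T(C_r^*(G))\to M(G)$, $\tau\mapsto\tau|_{C(G^{(0)})}$, using Lemma~\ref{Lemma:TracesDeterminedByRestriction} for injectivity and constructing an explicit inverse $\mu\mapsto\mu\circ E$ for surjectivity.

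First I would check that $\Phi$ lands in $M(G)$. Given $\tau\in T(C_r^*(G))$ and any compact open bisection $V\subseteq G$, the element $e:=1_V\in C_r^*(G)$ satisfies $e^*e=1_{s(V)}$ and $ee^*=1_{r(V)}$, so the trace property forces $\Phi(\tau)(r(V))=\Phi(\tau)(s(V))$. Since compact open bisections form a basis for the topology of $G$ (because $G$ is ample), this is exactly the definition of $G$-invariance, and $\Phi(\tau)\in M(G)$. Injectivity of $\Phi$ is then immediate from Lemma~\ref{Lemma:TracesDeterminedByRestriction}, which shows $\tau=\Phi(\tau)\circ E$.

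For surjectivity, given $\mu\in M(G)$, I would set $\tau_\mu:=\mu\circ E$. Positivity and unitality of $\tau_\mu$ follow because $E$ is a positive unital map. The crux is the trace identity $\tau_\mu(ab)=\tau_\mu(ba)$. By density of the Steinberg subalgebra $\mathrm{span}\{1_U:U\text{ compact open bisection}\}$ in $C_r^*(G)$, together with continuity of $\tau_\mu$, it suffices to verify this on indicators of compact open bisections $U,V$. A direct computation using $(1_U 1_V)=1_{UV}$ shows
\[
E(1_U 1_V)=1_{r(U\cap V^{-1})},\qquad E(1_V 1_U)=1_{s(U\cap V^{-1})}.
\]
Since $U\cap V^{-1}$ is itself a compact open bisection, the $G$-invariance of $\mu$ gives equality, completing the verification that $\tau_\mu$ is a trace.

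Finally, $\Phi$ is affine (obvious) and continuous: the weak$^*$-topology on $T(C_r^*(G))$ restricted to $C(G^{(0)})$ is exactly the weak$^*$-topology on $M(G)$. Being a continuous affine bijection between compact convex Hausdorff spaces, $\Phi$ is an affine homeomorphism, and the identification of extreme boundaries is a formal consequence. Non-emptiness of $M(G)$ (and hence of $\partial_e M(G)$) is recorded as fact~(1) in the preliminaries. The main obstacle is the trace verification for $\tau_\mu$; one could alternatively run it via an almost-finite approximation mimicking the proof of Lemma~\ref{Lemma:TracesDeterminedByRestriction} (approximating on $C_r^*(K)$ for a suitable elementary subgroupoid $K$, where the conclusion is standard), but the bisection computation sketched above exploits the ample structure directly and appears to be the most transparent route.
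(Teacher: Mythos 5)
Your proposal is correct and follows essentially the same route as the paper: the paper's proof simply records that the restriction map is affine, continuous, and surjective (the "well-known" facts you verify explicitly via the bisection computation for $\mu\circ E$) and derives injectivity from Lemma~\ref{Lemma:TracesDeterminedByRestriction}, exactly as you do. Your filled-in details (invariance of $\tau|_{C(G^{(0)})}$ from $1_V^*1_V=1_{s(V)}$, $1_V1_V^*=1_{r(V)}$, and the trace identity via $E(1_U1_V)=1_{r(U\cap V^{-1})}$) are accurate and are the standard arguments the paper leaves implicit.
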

\begin{proof}
	It is well-known that this map is affine, continuous, and surjective. Injectivity now follows from Lemma \ref{Lemma:TracesDeterminedByRestriction}. By the affineness we also 
	have that $\partial_eT(C_r^*(G))=\partial_e M(G)$, which are non-empty as $M(G)\neq \emptyset$. 
\end{proof}

 Let us now focus on the proofs of real rank zero and strict comparison. For many of the intermediate steps in the proof, we only need the hypothesis that
 $G^{(0)}$ admits an invariant measure with full support (i.e., $\mu \in M(G)$ such that $\text{supp}(\mu) = G^{(0)}$). Clearly, every measure in $M(G)\ne \emptyset$ has full support for a minimal almost finite groupoid $G$ (see \cite[Lemma 6.8]{MR2876963}). 
 


 

\begin{lem}\label{Lemma:CutdownBySmallProjections}
	Let $G$ be an almost finite groupoid such that $G^{(0)}$ admits a full-supported invariant measure. For every finite subset $F\subseteq C_c(G)$ 
	and every $\varepsilon>0$, there exists an elementary subgroupoid $K\subseteq G$ and a compact open subset $W\subseteq G^{(0)}$ such that 
	if $p:=\chi_W$ is the characteristic function on $W$, then the following are satisfied:
	\begin{enumerate}
		\item $r(supp(f)\cap (G\setminus K))\cup s(supp(f)\cap (G\setminus K))\subseteq W$ for all $f\in F$,
		\item $\norm{(1-p)f(1-p)}>\norm{f}-\varepsilon$ for all $f\in F$, and
		\item $\tau(p)<\varepsilon$ for all $\tau\in T(C_r^*(G))$.
	\end{enumerate}
\end{lem}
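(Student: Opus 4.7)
The plan is to apply almost finiteness to produce an elementary subgroupoid $K\subseteq G$ capturing most of the supports of the functions in $F$, and then define $W$ as a small compact open neighbourhood of the $r$- and $s$-images of the residual \emph{tails} $\supp(f)\setminus K$. Conditions (1) and (3) should then follow from the construction together with \cite[Lemma~3.8]{Suzuki} and Proposition~\ref{Prop:One-to-one Measures Traces}; the subtle point is (2), for which $K$ must be chosen finely enough that each $f\in F$ still has an approximate norm-witness whose relevant unit-space coordinates land outside $W$.

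Concretely, I would begin by covering each $\supp(f)$ by finitely many compact open bisections and letting $V$ denote the union (over $f\in F$) of these bisections together with their inverses; write $N$ for the total number of bisections. Applying Definition~\ref{Def:AlmostFinite} to $V$ with a sufficiently small parameter $\delta>0$ (fixed later), we obtain a $(V,\delta)$-invariant elementary subgroupoid $K\subseteq G$. Define
\[
W_0 := \bigcup_{f\in F}\bigl(r(\supp(f)\setminus K)\cup s(\supp(f)\setminus K)\bigr),
\]
a compact subset of $G^{(0)}$. By \cite[Lemma~3.8]{Suzuki} applied to each bisection $V_i$ covering $\supp(f)$, one has $\mu(r(V_i\setminus K))<\delta$ for every $\mu\in M(G)$, hence $\mu(W_0)<2N\delta$ uniformly in $\mu$. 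Since $G^{(0)}$ is totally disconnected, $W_0$ can be thickened to a compact open $W\supseteq W_0$ with $\mu(W)<\varepsilon$ uniformly in $\mu\in M(G)$; this uses outer regularity together with the weak$^*$-compactness of $M(G)$. Setting $p=\chi_W$, condition (1) holds by construction and condition (3) follows from Proposition~\ref{Prop:One-to-one Measures Traces}.

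For (2), the key observation is that $((1-p)f(1-p))(g)=f(g)$ when $r(g),s(g)\notin W$ and equals zero otherwise, because $1-p\in C(G^{(0)})$ acts as multiplication by $\chi_{G^{(0)}\setminus W}$ via the source/range. Given an approximate norm-witness for $f$, namely a regular representation $\pi_{x_f}$ and a finitely supported unit vector $\xi_f\in \ell^2(G_{x_f})$ with $\|\pi_{x_f}(f)\xi_f\|>\|f\|-\varepsilon$, a direct computation shows that $\pi_{x_f}((1-p)f(1-p))\xi_f=\pi_{x_f}(f)\xi_f$ provided $W$ is disjoint from the finite set of $r$- and $s$-coordinates of $\supp(\xi_f)$ and of $\supp(\pi_{x_f}(f)\xi_f)$. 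Using minimality (so that orbits are dense) together with the smallness of $W_0$ in measure, one can arrange $x_f$ and $\xi_f$ so that this finite set is disjoint from $W_0$; then total disconnectedness of $G^{(0)}$ allows one to thicken $W_0$ to a compact open $W$ that still avoids it, yielding (2).

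The main obstacle is reconciling (1) and (2): condition (1) forces $W$ to contain $W_0$, while (2) forces $W$ to avoid certain norm-witnesses. The resolution hinges on the flexibility of almost finiteness — by choosing $\delta$ sufficiently small and, if necessary, enlarging the compact input to Definition~\ref{Def:AlmostFinite} to incorporate data relevant to the witnesses, we can make $W_0$ small enough (both in measure and in location) that the witnesses can be positioned away from it. This is the delicate coordination that the full proof must carry out carefully.
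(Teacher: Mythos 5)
Your construction of $K$ and $W$ and your treatment of conditions (1) and (3) essentially match the paper's (modulo an unnecessary detour: taking $W=r(V\setminus K)\cup s(V\setminus K)$, with $V$ a union of compact open bisections covering the supports, already yields a compact \emph{open} set, so no thickening via outer regularity and weak$^*$-compactness is needed). The genuine gap is in your argument for (2). First, you invoke minimality to position the norm-witness $x_f,\xi_f$ away from $W_0$, but minimality is not a hypothesis of this lemma --- it assumes only the existence of a full-supported invariant measure, and the lemma is applied downstream (Lemma \ref{Lemma:SubprojectionsInElementarySubgroupoid}, Theorem \ref{Theorem:Traces detect positive elements in K_0}) in exactly that generality. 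Second, and more fundamentally, the ``delicate coordination'' you defer at the end is the actual content of the proof and your outline does not resolve it: condition (1) forces $W\supseteq W_0$, while your strategy for (2) requires a finitely supported witness whose finitely many $r$-coordinates (points of a single orbit) exactly avoid $W$; arranging this requires transporting the witness to a new base point while preserving the matrix coefficient, an argument you assert but do not supply, and one that is circular as ordered (the witness must dodge a set built from data chosen to capture the supports).

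The paper avoids all of this with the one idea your proposal is missing: the full-supported invariant measure $\nu$ makes the regular representation $\pi$ of $C_r^*(G)$ on $L^2(G,\nu)$ faithful (by \cite[Corollary~2.4]{KS02}), so one fixes witnesses $\xi_i,\eta_i\in C_c(G)\subseteq L^2(G,\nu)$ with $\abs{\langle\pi(f_i)\xi_i,\eta_i\rangle}>\norm{f_i}-\varepsilon$ \emph{before} choosing $K$ and $W$, and then estimates $\norm{\pi(p)\xi_i}^2=\int_W\sum_{g\in G^x}\abs{\xi_i(g)}^2\,d\nu(x)\le R\,\nu(W)<R\delta$ for a constant $R$ depending only on the $\xi_i$. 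No avoidance is needed: smallness of $W$ in $\nu$-measure already forces the cut-down vectors $\pi(1-p)\xi_i$, $\pi(1-p)\eta_i$ to be $L^2$-close to the originals, so the matrix coefficient, and hence the norm bound in (2), survives for $\delta$ small. This is precisely where the full-support hypothesis enters, and it is what lets the lemma dispense with minimality. As written, your argument establishes at best a weaker statement and leaves its central step unproved.
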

\begin{proof}
By \cite[Corollary 2.4]{KS02}, the condition about the existence of a full-supported invariant measure $\nu$ guarantees that the associated regular representation $\pi:C_r^*(G)\rightarrow B(L^2(G,\nu))$ is 
injective.
 
	 Using that, write $F=\lbrace f_1,\ldots,f_k\rbrace$, and choose functions $\xi_1,\ldots, \xi_k,\eta_1,\ldots,\eta_k\in C_c(G)$ such that $\norm{\xi_i}=\norm{\eta_i}=1$ and $\abs{\langle \pi(f_i)\xi_i,\eta_i\rangle}>\norm{f_i}-\varepsilon$ for all $1\leq i\leq k$.
	 Consider the compact set $$C:=\bigcup\limits_{i=1}^k supp(f_i)\cup supp(f_i^*)\cup supp(\xi_i)\cup supp(\eta_i).$$
	 Since $G$ is ample, we can cover $C$ by finitely many compact open bisections $V_1,\ldots, V_l$ and we let $V:=V_1\cup\cdots\cup V_l$. Let $0<\delta<\varepsilon$ to be determined. As $G$ is almost finite, we can find a $(V\cup V^{-1},\frac{\delta}{2l})$-invariant elementary subgroupoid $K\subseteq G$. Let $W:=r(V\setminus K)\cup s(V\setminus K)$ (which depends on the choice of $\delta$). Then $(1)$ is clearly satisfied by $W$.
	 Moreover, if $\tau\in T(C_r^*(G))$, then there exists a $\mu\in M(G)$ such that $\tau(\chi_A)=\mu(A)$ for every compact open subset $A\subseteq G^{(0)}$.
	 By \cite[Lemma~3.8]{Suzuki} we have $\mu(r(V_i\setminus K))<\frac{\delta}{2l}$ and $\mu(s(V_i\setminus K))<\frac{\delta}{2l}$ for all $1\leq i\leq l$, and hence
	 $$\tau(p)=\mu(W)\leq \sum\limits_{i=1}^l \mu(r(V_i\setminus K))+\mu(s(V_i\setminus K))<\delta<\varepsilon.$$
	 It remains to check $(2)$:
	 Let $R :=\max_{1\leq i\leq l}\sup_{x\in G^{(0)}}\sum_{g\in G^x} \abs{\xi_i(g)}^2$.
	 Then we have
	 \begin{align*}	
	 \norm{\pi(1-p)\xi_i-\xi_i}^2&=\norm{\pi(p)\xi_i}^2\\
	 &=\langle \pi(p)\xi_i,\xi_i\rangle\\
	 &=\int\limits_{G^{(0)}}\sum\limits_{g\in G^x}p(r(g))\xi_i(g)\overline{\xi_i(g)}d\nu(x)\\
	 &=\int\limits_W \sum\limits_{g\in G^x}\abs{\xi_i(g)}^2d\nu(x) \le \nu (W)R < R \delta.
	 \end{align*}
	Similarly, $\norm{\pi(1-p)\eta_i-\eta_i}<R' \delta$ with $R'$ chosen as $R$, but with the $\xi_i$ replaced by $\eta_i$.
	 Using this and that $\abs{\langle \pi(f_i)\xi_i,\eta_i\rangle}>\norm{f_i}-\varepsilon$, we can choose $\delta$ (and hence $W$ and $p$) so small, such that
	 $$\abs{\langle \pi(f_i)\pi(1-p)\xi_i,\pi(1-p)\eta_i\rangle}>\norm{f_i}-\varepsilon$$
	 for $1\leq i\leq k$.
	 Hence, we get
	 \begin{align*}
	 \norm{(1-p)f_i(1-p)}&=\norm{\pi((1-p)f_i(1-p))}\\
	 &\geq \abs{\langle\pi(f_i)\pi(1-p)\xi_i,\pi(1-p)\eta_i\rangle}\\
	 &>\norm{f_i}-\varepsilon,
	 \end{align*}
	 as desired.
\end{proof}

The following lemma is a local version of \cite[Lemma~3.3]{MR2134336} for finite sets of projections. The proof follows almost verbatim to \cite[Lemma~3.3]{MR2134336}, just using Lemma \ref{Lemma:CutdownBySmallProjections} instead of \cite[Lemma~3.1]{MR2134336}. We include the proof for completeness.
\begin{lem}\label{Lemma:SubprojectionsInElementarySubgroupoid}
	Let $G$ be an almost finite groupoid such that $G^{(0)}$ admits a full-supported invariant measure. Then for every finite set of 
	projections $E=\lbrace e_1,\ldots,e_n\rbrace\subseteq C_r^*(G)$ and $\varepsilon>0$, there exists an elementary subgroupoid 
	$K\subseteq G$ and projections $q_1,\ldots,q_n\in C_r^*(K)$ such that $q_i\precsim e_i$ and $\tau(e_i)-\tau(q_i)<\varepsilon$ for all $\tau\in T(C_r^*(G))$.
\end{lem}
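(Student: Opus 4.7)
The plan is to mimic Phillips' argument from \cite[Lemma~3.3]{MR2134336}, with Lemma~\ref{Lemma:CutdownBySmallProjections} playing the role of \cite[Lemma~3.1]{MR2134336}. First, I would approximate each projection $e_i$ in norm by a self-adjoint element $a_i \in C_c(G)$ with $\|a_i - e_i\| < \delta$, for a constant $\delta > 0$ to be fixed later. Applying Lemma~\ref{Lemma:CutdownBySmallProjections} to the finite set $F = \{a_1, \ldots, a_n\}$ and to a tolerance $\varepsilon' > 0$ yields an elementary subgroupoid $K \subseteq G$ and a compact open $W \subseteq G^{(0)}$ with $p := \chi_W$ such that $\tau(p) < \varepsilon'$ for every $\tau \in T(C_r^*(G))$. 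Condition $(1)$ of that lemma ensures that $b_i := (1-p)a_i(1-p)$ has support contained in $K$ and hence lies in $C_r^*(K)$, while $b_i$ is within $\delta$ of $c_i := (1-p)e_i(1-p)$.

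Next I would extract the desired projection via functional calculus. Since $\|c_i - e_i\|$ is small (both $e_i - (1-p)e_i(1-p) = pe_i + e_ip - pe_ip$ can be crudely controlled via the estimate $\|(1-p)e_i(1-p) - e_i(1-p)^2\|=0$ together with how close $e_i$ is to commuting with the relevant projections, or more directly via a gap argument), the spectrum of $c_i$, and hence of $b_i$ for $\delta$ small enough, clusters near $\{0,1\}$ with a spectral gap near $1/2$. Choose a continuous function $\chi\colon \R \to [0,1]$ equal to $0$ near $0$ and $1$ near $1$ with support disjoint from that gap, and set $q_i := \chi(b_i) \in C_r^*(K)$. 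Then $q_i$ is a projection, and standard perturbation estimates give that $q_i$ is close in norm to the spectral projection $r_i := \chi(c_i)$ of $c_i$ above $1/2$; in particular $q_i$ and $r_i$ are Murray-von Neumann equivalent once $\|q_i - r_i\|<1$.

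For the subequivalence $q_i \precsim e_i$, I would factor $c_i = v_i v_i^*$ with $v_i = (1-p)e_i$. Then $r_i$ is the spectral projection of $v_iv_i^*$ above $1/2$ and is thus Murray-von Neumann equivalent to the spectral projection $s_i$ of $v_i^*v_i = e_i(1-p)e_i$ above $1/2$. Since $\chi$ vanishes at $0$, the element $s_i$ lies in the hereditary subalgebra $e_i C_r^*(G) e_i$, giving $s_i \le e_i$; chaining yields $q_i \sim r_i \sim s_i \le e_i$, hence $q_i \precsim e_i$. For the trace estimate, compute
\[
\tau(c_i) \;=\; \tau\bigl(e_i(1-p)\bigr) \;=\; \tau(e_i) - \tau(e_i p) \;\ge\; \tau(e_i) - \tau(p) \;>\; \tau(e_i) - \varepsilon',
\]
and combine with $|\tau(q_i) - \tau(c_i)| \le \|q_i - c_i\| \le \|q_i - b_i\| + \|b_i - c_i\|$, both of which can be made arbitrarily small by the choices of $\delta$ and $\chi$.

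The main obstacle is not conceptual but the bookkeeping: one must choose $\delta$, $\varepsilon'$ and $\chi$ in the correct order so that the spectral-gap and projection-perturbation estimates hold uniformly for $i = 1, \ldots, n$, and so that the final trace inequality $\tau(e_i) - \tau(q_i) < \varepsilon$ holds for every tracial state simultaneously.
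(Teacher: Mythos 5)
Your overall skeleton is the same as the paper's (approximate the $e_i$ by selfadjoint elements of $C_c(G)$, apply Lemma~\ref{Lemma:CutdownBySmallProjections}, compress by $1-p$ to land in $C_r^*(K)$, extract projections there, then run the trace and subequivalence estimates), but the step where you actually produce the projections $q_i$ has a genuine gap. You claim that $\|c_i - e_i\|$ is small and hence that the spectrum of $c_i=(1-p)e_i(1-p)$ clusters near $\{0,1\}$ with a gap at $1/2$. This is false: Lemma~\ref{Lemma:CutdownBySmallProjections} only gives $\tau(p)<\varepsilon'$ for all tracial states, which is a statement about the \emph{trace} of $p$, not about the \emph{norm} of $pe_i$. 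A projection of arbitrarily small trace can satisfy $\|pe_i\|=1$ (e.g.\ a small subprojection of $e_i$), in which case $pe_i+e_ip-pe_ip$ has norm $1$ and the compression $(1-p)e_i(1-p)$ can have spectrum filling all of $[0,1]$ (this is the generic two-projections picture). There is likewise no reason for $e_i$ to nearly commute with $p$. Consequently $\chi(b_i)$ need not be a projection, $r_i=\chi(c_i)$ need not be a projection, and the chain $q_i\sim r_i\sim s_i\le e_i$ collapses. Condition $(2)$ of Lemma~\ref{Lemma:CutdownBySmallProjections} only pins the top of the spectrum of the compression near $1$; it creates no gap below it.

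This is exactly the difficulty that forces the paper (following Phillips, \cite[Lemma~3.3]{MR2134336}) to take a different route at this point: one works with the positive element $d_i(1-p)d_i\in C_r^*(K)$, which has no spectral gap, and instead of functional calculus alone one invokes \cite[Lemma~3.2]{MR2134336}, which uses the AF structure (real rank zero) of $C_r^*(K)$ to produce a projection $q_i\in\overline{b_iC_r^*(K)b_i}$ with $a_iq_i=q_i$ and $\|q_ib_i-b_i\|$ small, where $a_i=f(d_i(1-p)d_i)$ and $b_i=g(d_i(1-p)d_i)$ for suitable ramp functions $f,g$. The trace estimate then comes from $b_i+h(d_i(1-p)d_i)\ge d_i(1-p)d_i$ with $\|h(\cdot)\|\le\varepsilon/6$, and the subequivalence $q_i\precsim e_i$ comes from the perturbation estimate $\|e_iq_i-q_i\|<\delta_0$ (via $\|d_i(1-p)d_i-e_i(1-p)e_i\|<\delta$ and $e_if(e_i(1-p)e_i)=f(e_i(1-p)e_i)$), not from a genuine Murray--von Neumann equivalence onto a subprojection of $e_i$. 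To repair your argument you would need to replace the spectral-gap step by an appeal to real rank zero of $C_r^*(K)$ (or an equivalent Riesz-type decomposition in AF algebras); your trace bookkeeping, e.g.\ $\tau((1-p)e_i(1-p))\ge\tau(e_i)-\tau(p)$, and the $vv^*$ versus $v^*v$ trick are fine and would survive that repair.
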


\begin{proof} Without loss of generality we may assume that $\varepsilon<6$.
	Choose $\delta_0>0$ such that whenever $A$ is a $\mathrm{C}^*$-algebra and $p_1,p_2\in A$ are projections with $\norm{p_1p_2-p_2}<\delta_0$, then $p_2\precsim p_1$.
	Let $f:[0,\infty)\rightarrow [0,1]$ be the continuous function given by
	$f(t)=\frac{6t}{\varepsilon}$ for $0\leq t\leq \frac{\varepsilon}{6}$ and $1$ otherwise.
	Now choose $\delta>0$ such that whenever $A$ is a $\mathrm{C}^*$-algebra and $a_1,a_2\in A$ are positive elements with $\norm{a_1},\norm{a_2}\leq 1$ and $\norm{a_1-a_2}<\delta$, then $\norm{f(a_1)-f(a_2)}<\frac{\delta_0}{2}$.

	Since $C_c(G)$ is dense in $C_r^*(G)$, there exist selfadjoint elements $d_1,\ldots, d_n\in C_c(G)$ with $\norm{d_i}\leq 1$ and
	$$\norm{e_i-d_i}<\min(\frac{\delta}{2},\frac{\varepsilon}{6}),\  1\leq i\leq n.$$
	Now apply Lemma \ref{Lemma:CutdownBySmallProjections} to $F=\lbrace d_1,\ldots, d_n\rbrace$ and $\varepsilon>0$ to obtain a projection $p=\chi_W\in C(G^{(0)})\subseteq C_r^*(G)$ and an elementary subgroupoid $K\subseteq G$, 
	such that $r(supp(d_i)\cap (G\setminus K))\cup s(supp(d_i)\cap (G\setminus K))\subseteq W$ for $i=1,\dots , n$, and $\tau(p)<\varepsilon /6$ for all $\tau \in T(C_r^*(G))$.
	Then, we have $$((1-p)d_i)(g)=\sum\limits_{h\in G^{r(g)}}(1-p)(h)d_i(h^{-1}g)=(1-p)(r(g))d_i(g),$$ which can only be nonzero if $g\in K$. Hence, we have $(1-p)d_i\in C_r^*(K)$ and $d_i(1-p)=((1-p)d_i)^*\in C_r^*(K)$.
	For every $\tau\in T(C_r^*(G))$, we have that $\tau(pe_i(1-p))=0$. Hence, $$\tau((1-p)e_i(1-p))=\tau(e_i)-\tau(e_ip)\geq \tau(e_i)-\tau(p)>\tau(e_i)-\frac{\varepsilon}{6}.$$
	Moreover, using that $\norm{d_i^2-e_i^2}\leq \norm{d_i^2-d_ie_i}+\norm{d_ie_i-e_i^2}\leq \frac{\varepsilon}{3}$ we obtain that
	$$\tau(d_i(1-p)d_i)=\tau((1-p)d_i^2(1-p))>\tau(e_i)-\frac{\varepsilon}{2}.$$
Also, each $d_i(1-p)d_i$ is a positive element in $C_r^*(K)$.
Let $g,h:[0,\infty)\rightarrow [0,1]$ be given by
\begin{equation*}
   g(t)=\left\{
   \begin{array}{ll}
      0 & 0\leq t\leq \frac{\varepsilon}{6} \\
      6\varepsilon^{-1}t-1 & \frac{\varepsilon}{6}\leq t\leq \frac{\varepsilon}{3}\\
      1 & \frac{\varepsilon}{3}\leq t
 \end{array}\right.  
    \text{ and }
     h(t)=\left\{
\begin{array}{ll}
      t & 0\leq t\leq \frac{\varepsilon}{6} \\
      \frac{\varepsilon}{6} & \frac{\varepsilon}{6}\leq t
 \end{array}
\right.
\end{equation*}
	
	Then put $a_i:=f(d_i(1-p)d_i)$, $b_i:=g(d_i(1-p)d_i)$, and $c_i:=h(d_i(1-p)d_i)$. It follows that $a_i,b_i,c_i\in C_r^*(K)$ are positive elements for all $1\leq i\leq n$.
	Moreover, we have the following relations:
	$a_ib_i=b_i$, $b_i+c_i\geq d_i(1-p)d_i$, $\norm{a_i}\leq 1$, $\norm{b_i}\leq 1$ and $\norm{c_i}\leq \frac{\varepsilon}{6}$.
	In particular we have $\tau(c_i)\leq\frac{\varepsilon}{6}$ for every $\tau\in T(C_r^*(G))$, whence
	$$\tau(b_i)=\tau(b_i+c_i)-\tau(c_i)\geq \tau(d_i(1-p)d_i)-\frac{\varepsilon}{6}>\tau(e_i)-\frac{2\varepsilon}{3}.$$
	Now use the fact that $C_r^*(K)$ is an AF-algebra to apply \cite[Lemma~3.2]{MR2134336}, which gives us projections $q_i\in \overline{b_iC_r^*(K)b_i}$ such that $a_iq_i=q_i$ and $\norm{q_ib_i-b_i}<\frac{\varepsilon}{6}$. Then $\norm{q_ib_iq_i-b_i}<\frac{\varepsilon}{3}$. So for every $\tau\in T(C_r^*(G))$ we have $$\tau(q_i)\geq\tau(q_ib_iq_i)>\tau(b_i)-\frac{\varepsilon}{3}>\tau(e_i)-\varepsilon,$$
	which is equivalent to $\tau(e_i)-\tau(q_i)<\varepsilon$.
	It remains to show, that $q_i\precsim e_i$:
	Since $\norm{d_i}\leq 1$, we have
	\begin{align*}
\norm{d_i(1-p)d_i-e_i(1-p)e_i}&<\norm{d_i(1-p)d_i-e_i(1-p)d_i} +\norm{e_i(1-p)d_i-e_i(1-p)e_i}\\
&\leq 2\norm{d_i-e_i}\\
&<\delta.
	\end{align*}
	The choice of $\delta$ then yields $\norm{a_i-f(e_i(1-p)e_i)}<\frac{\delta_0}{2}$.
	Using the equality $e_if(e_i(1-p)e_i)=f(e_i(1-p)e_i)$, we obtain $\norm{e_ia_i-a_i}<\delta_0$.
	Since $a_iq_i=q_i$, we also have $\norm{e_iq_i-q_i}=\norm{e_ia_iq_i-a_iq_i}\leq\norm{e_ia_i-a_i}\norm{q_i}<\delta_0$. From the choice of $\delta_0$ we conclude that $q_i\precsim e_i$ as desired.
\end{proof}
The following Lemma is the special tool needed to show Theorem \ref{Theorem:Traces detect positive elements in K_0}.
\begin{lem}\label{Lemma:Projections in compact groupoid algebras}
	Let $K$ be an elementary groupoid. Then for each projection $p\in C_r^*(K)$ there exists a projection $q\in C(K^{(0)})$ such that $p\sim q$.
\end{lem}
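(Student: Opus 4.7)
The plan is to exploit the clopen castle structure of the elementary groupoid $K$, provided by \cite[Lemma~4.7]{MR2876963}. Write the castle as $K^{(0)}=\bigsqcup_{i=1}^n\bigsqcup_{j=1}^{N_i}F_j^{(i)}$ with compact open bisections $V_{j,k}^{(i)}$ and associated partial homeomorphisms $\theta_{j,k}^{(i)}$ from Definition~\ref{def:castle}. The cocycle relations for the $\theta_{j,k}^{(i)}$ together with the decomposition $K=\bigsqcup_{i,j,k}V_{j,k}^{(i)}$ yield a $*$-isomorphism
\[
\Phi\colon C_r^*(K)\xrightarrow{\ \cong\ }\bigoplus_{i=1}^n M_{N_i}\bigl(C(F_1^{(i)})\bigr),
\]
sending $\chi_{V_{j,k}^{(i)}}$ to the matrix unit $e_{j,k}\otimes 1$ inside the $i$-th summand. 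Under $\Phi$ the diagonal subalgebra $C(K^{(0)})$ corresponds to the diagonal matrices, with each $C(F_j^{(i)})$ identified via pullback along $\theta_{j,1}^{(i)}$ with $e_{j,j}\otimes C(F_1^{(i)})$. Since any projection $p\in C_r^*(K)$ splits along the direct sum, it is enough to work in a single tower, so I fix $F=F_1^{(i)}$, $N=N_i$, and a projection $p\in M_N(C(F))$.

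The pointwise rank map $x\mapsto\Tr(p(x))$ is continuous and integer-valued on the totally disconnected space $F$, hence locally constant, producing a clopen partition $F=F_0\sqcup\cdots\sqcup F_N$ where $p$ has constant rank $k$ on $F_k$. I next show $p|_{F_k}\sim e_k\otimes\chi_{F_k}$ in $M_N(C(F))$, where $e_k=\mathrm{diag}(1,\ldots,1,0,\ldots,0)\in M_N$. Around any $x_0\in F_k$, choose $u_0\in M_N$ with $u_0 p(x_0) u_0^*=e_k$; by norm continuity of $p$ there is a clopen neighbourhood $U\subseteq F_k$ of $x_0$ on which $\|u_0 p(x) u_0^*-e_k\|<1$. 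The standard fact that two projections at distance less than $1$ are unitarily equivalent via an explicit functional-calculus formula then promotes this to a unitary $w\in M_N(C(U))$ with $w p|_U w^*=e_k\otimes\chi_U$. Compactness of $F_k$ and total disconnectedness of $F$ allow me to cover $F_k$ by a finite disjoint clopen family of such $U$'s; the local partial isometries extend by zero to $M_N(C(F_k))$ and add up to a global partial isometry witnessing $p|_{F_k}\sim e_k\otimes\chi_{F_k}$.

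Summing over $k$ yields $p\sim\sum_{k=0}^N e_k\otimes\chi_{F_k}$, a diagonal matrix whose $(j,j)$-entry equals $\chi_{F_j\sqcup\cdots\sqcup F_N}\in C(F)$. Pulling back through $\Phi$, this diagonal matrix in the $i$-th summand is precisely the image of the characteristic function of the clopen set $W_i=\bigsqcup_{j=1}^{N_i}\theta_{j,1}^{(i)}(F_j\sqcup\cdots\sqcup F_N)\subseteq K^{(0)}$. Performing the construction in every tower and setting $q=\chi_{\bigsqcup_i W_i}\in C(K^{(0)})$ gives $p\sim q$, as desired. The only non-bookkeeping step is the local trivialisation of constant-rank projections; this is essentially a finite-dimensional vector bundle argument, but it is rendered painless by the zero-dimensionality of $F$, which both provides clopen trivialising neighbourhoods and eliminates any sheaf-theoretic gluing obstruction.
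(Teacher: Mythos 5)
Your proof is correct and follows essentially the same route as the paper: decompose $C_r^*(K)$ as a direct sum of matrix algebras over clopen pieces of the unit space, use the locally constant rank function $x\mapsto\Tr(p(x))$ to partition the base into clopen sets, and replace $p$ by a diagonal projection of equal pointwise rank. The only differences are that where the paper cites a textbook exercise for the fact that two projections in $M_n(C(X))$ with the same pointwise rank over a compact totally disconnected $X$ are equivalent, you prove it directly by local trivialisation and clopen patching, and you additionally spell out the identification of the resulting diagonal projection with a characteristic function in $C(K^{(0)})$, which the paper leaves implicit.
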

\begin{proof}
	We know that $C_r^*(K)\cong \bigoplus_{i=1}^m M_{n_i}(C(A_i))$ for some $n_i\in \N$ and pairwise disjoint clopen subsets $A_1,\ldots, A_m\subseteq K^{(0)}$. Hence, it is enough to prove the claim for an algebra of the form $M_n(C(X))$ for a compact and totally disconnected Hausdorff space $X$. So let $p\in M_n(C(X))$ be a projection. We may assume that $p\neq 0$, otherwise there is nothing to prove. Then $x\mapsto\Tr(p(x))$ is an integer valued continuous function on $X$. Using continuity and the fact that $X$ is compact and totally disconnected, we can find $r\in \N$, a partition $X=X_1\sqcup\ldots\sqcup X_r$ of $X$ by clopen subsets, and $0<n_1<\ldots< n_r\in\N$ such that $\Tr(p(x))=n_i$ for all $x\in X_i$. Note, that we must have $n_r\leq n$.
	For each $1\leq i\leq r$, let $\chi_i\in C(X)$ denote the characteristic function on $X_i$. Set $n_0:=0$ and $n_{r+1}:=n$ to make the following definition consistent: for each $1\leq i\leq r$ let $q_i\in M_{n_i-n_{i-1}}(C(X))$ be the diagonal matrix
	$$q_i:=\begin{pmatrix}
	\sum_{j=i}^r \chi_j &  & 0\\
	&\ddots &  \\
	0& & \sum_{j=i}^r \chi_j
	\end{pmatrix}.$$
	Each $q_i$ is a projection, since the characteristic functions $\chi_j$ are pairwise orthogonal.
	Define $q:=diag(q_1,\ldots, q_r,0)\in M_n(C(X))$. Then $q$ is a projection and $\Tr(q(x))=\Tr(p(x))$ for all $x\in X$. Since $X$ is totally disconnected, the result follows from \cite[Excercise 3.4]{zbMATH01541843}.
\end{proof}

\begin{thm}\label{Theorem:Traces detect positive elements in K_0}
	Let $G$ be an almost finite groupoid such that $G^{(0)}$ admits a full-supported invariant measure.
	If $x\in K_0(C_r^*(G))$ satisfies $\tau_*(x)> 0$ for all $\tau\in T(C_r^*(G))$, then there exists a projection $e\in M_\infty(C_r^*(G))$ such that $x=[e]$.
\end{thm}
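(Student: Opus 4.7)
The plan is to reduce the statement to a Murray--von Neumann subequivalence between two projections representing $x$, and then bootstrap the already available ``strict comparison for characteristic functions'' (fact~(3) of the preliminaries) into the right conclusion via the approximation tools built in Lemmas~\ref{Lemma:SubprojectionsInElementarySubgroupoid} and~\ref{Lemma:Projections in compact groupoid algebras}. Write $x=[p]-[q]$ with $p,q$ projections in $M_n(C_r^*(G))$; since $M_n(C_r^*(G))\cong C_r^*(G\times\mathcal{R}_n)$ and $G\times\mathcal{R}_n$ inherits almost finiteness together with a full-supported invariant measure, I can safely assume $p,q\in C_r^*(G)$. A projection $e$ with $[e]=x$ then exists as soon as $q\precsim p$ in the Murray--von Neumann sense: picking a subprojection $q'\le p$ with $q'\sim q$ makes $e:=p-q'$ work. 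Compactness of $T(C_r^*(G))$ forces a uniform gap $\delta:=\inf_{\tau}(\tau(p)-\tau(q))>0$.

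Next, I fix $0<\varepsilon<\delta/2$ and apply Lemma~\ref{Lemma:SubprojectionsInElementarySubgroupoid} to the finite set $\{p,\,1-q\}$. This produces an elementary subgroupoid $K\subseteq G$ and projections $P,R\in C_r^*(K)$ with $P\precsim p$, $R\precsim 1-q$, and $\tau(p)-\tau(P)<\varepsilon$, $\tau(1-q)-\tau(R)<\varepsilon$ for all $\tau$. Set $Q:=1-R\in C_r^*(K)$. The full-supported invariant measure hypothesis gives $C_r^*(G)$ a faithful trace, hence stable finiteness. Therefore $R\precsim 1-q$ upgrades to $q\precsim Q$ (pick $R'\le 1-q$ with $R'\sim R$; then $q\le 1-R'\sim 1-R=Q$, since equivalent projections in a finite unital algebra have equivalent complements). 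The trace bounds sharpen to $\tau(P)-\tau(Q)>\delta-2\varepsilon>0$ uniformly in $\tau$.

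Now I invoke Lemma~\ref{Lemma:Projections in compact groupoid algebras} inside the elementary algebra $C_r^*(K)$ to find clopen sets $A,B\subseteq G^{(0)}$ with $P\sim \chi_A$ and $Q\sim \chi_B$. The trace inequality reads $\mu(A)>\mu(B)$ for every $\mu\in M(G)$, so fact~(3) of the preliminaries delivers $B\precsim A$ dynamically, and hence $\chi_B\precsim \chi_A$ in $C_r^*(G)$. Chaining the Murray--von Neumann subequivalences,
\[
q\precsim Q \sim \chi_B \precsim \chi_A \sim P \precsim p,
\]
yields $q\precsim p$, completing the argument.

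The step I expect to be the main obstacle is the complementation trick that upgrades $R\precsim 1-q$ to $q\precsim 1-R$: Lemma~\ref{Lemma:SubprojectionsInElementarySubgroupoid} only produces sub-projections, whereas I need a projection in $C_r^*(K)$ that dominates $q$. This detour works only because $C_r^*(G)$ is stably finite, and that is precisely where the full-supported invariant measure assumption is essential (it furnishes a faithful tracial state). The matrix-amplification at the very beginning and the transfer of Murray--von Neumann equivalences between $C_r^*(K)$ and $C_r^*(G)$ are routine by comparison.
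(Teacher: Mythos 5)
Your overall architecture (the matrix reduction, the uniform trace gap from weak-$*$ compactness, applying Lemma~\ref{Lemma:SubprojectionsInElementarySubgroupoid} to $\{p,1-q\}$, pushing the resulting projections into $C(G^{(0)})$ via Lemma~\ref{Lemma:Projections in compact groupoid algebras}, and comparing the characteristic functions with fact~(3)) matches the paper's proof step for step. The gap sits exactly where you flagged it: the complementation step. The assertion that ``equivalent projections in a finite unital algebra have equivalent complements'' is false. From $R'\sim R$ you only get $[1-R']=[1-R]$ in $K_0$; upgrading this to a Murray--von Neumann equivalence $1-R'\sim 1-R$ requires cancellation of projections, and stable finiteness (equivalently, a faithful trace) does not imply cancellation --- there are stably finite, even simple, $C^*$-algebras without it. Cancellation would follow from stable rank one, but Suzuki's theorem provides that only for \emph{minimal} almost finite groupoids, whereas the present theorem assumes only a full-supported invariant measure, and the paper explicitly notes that non-minimal almost finite groupoids typically fail stable rank one. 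Note also that the statement you are driving at, $q\precsim p$, is essentially Corollary~\ref{cor:ComparisonProjections}, which the paper can only deduce (from this theorem \emph{plus} stable rank one) under minimality; so your route, if it worked, would prove something strictly stronger than the theorem under weaker hypotheses.

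The repair --- and the route the paper actually takes --- is to abandon the subequivalence $q\precsim p$ and instead telescope in $K_0$. With $P\sim P'\le p$, $R\sim R'\le 1-q$, and $\chi_B\sim \chi_B'\le \chi_A$ (the last from fact~(3)), one has
\[
x=[p]-[q]=\bigl([p]-[P]\bigr)+\bigl([\chi_A]-[\chi_B]\bigr)+\bigl([1-R]-[q]\bigr)=[p-P']+[\chi_A-\chi_B']+[1-q-R'],
\]
where the last equality uses only $K_0$ identities such as $[1-R]-[q]=[1]-[R']-[q]=[1-q]-[R']=[1-q-R']$. Each summand is the class of an honest projection, so $x=[e]$ with $e$ their direct sum; no equivalence of complements, and hence no cancellation, is ever invoked.
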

\begin{proof}
	Write $x=[q]-[p]$ for two projections $p,q\in M_n(C_r^*(G))$ for some large enough $n\in\N$. Replacing $G$ by $G\times \lbrace1,\ldots, n\rbrace^2$ and using that $C_r^*(G\times \lbrace 1,\ldots, n\rbrace^2)\cong M_n(C_r^*(G))$, we may assume that $p,q\in C_r^*(G)$.
	Since $T(C_r^*(G))$ is weak-* compact, there exists $\varepsilon>0$ such that $\tau(q)-\tau(p)=\tau_*(x)>\varepsilon$ for all $\tau\in T(C_r^*(G))$.
	Now we apply Lemma \ref{Lemma:SubprojectionsInElementarySubgroupoid} to $E=\lbrace q,1-p\rbrace$ to obtain an elementary subgroupoid $K\subseteq G$ and projections $q_0,f_0\in C_r^*(K)$ such that $q_0\precsim q$ and $f_0\precsim 1-p$ and $\tau(q)-\tau(q_0)<\frac{\varepsilon}{3}$ and $\tau(1-p)-\tau(f_0)<\frac{\varepsilon}{3}$. Combining these three inequalities we get
	$$\tau(q_0)-\tau(1-f_0)>\frac{\varepsilon}{3}>0\ \forall \tau\in T(C_r^*(G)).$$
	Now since $K$ is an elementary subgroupoid of $G$, we can invoke Lemma \ref{Lemma:Projections in compact groupoid algebras} to find projections $q_1,f_1\in C(G^{(0)})$ such that $q_1\sim q_0\precsim q$ and $f_1\sim f_0\precsim 1-p$.
	Hence, $$\tau(q_1)-\tau(1-f_1)>0\ \forall \tau\in T(C_r^*(G)).$$
	By Proposition \ref{Prop:One-to-one Measures Traces} every trace corresponds to a $G$-invariant measure and vice versa. Since $q_1,f_1$ must be the characteristic functions 
	of some clopen subsets of $G^{(0)}$, it follows from \cite[Lemma 3.7]{ABBL} that $1-f_1$ is Murray-von Neumann subequivalent to $q_1$. Let $q_2\in C_r^*(G)$ be a projection such that $1-f_1\sim q_2\leq q_1$.
	Since $q_1\precsim q$, there exists a projection $q'\in C_r^*(G)$ such that $q_1\sim q'\leq q$ and since $f_1\precsim 1-p$ there exists $f'\in C_r^*(G)$ such that $f_1\sim f'\leq 1-p$. Then
	\begin{align*}
	x&=[q]-[p]=([q]-[q_1])+([q_1]-[q_2])+([q_2]-[p])\\
	& = [q-q']+[q_1-q_2]+[1-p-f']>0,
	\end{align*}
	which concludes the proof.
\end{proof}

As an easy application of Theorem \ref{Theorem:Traces detect positive elements in K_0} and the main theorem in \cite{Suzuki}, we deduce the following corollary. Recall that a C*-algebra $A$ has \emph{comparison of projections} if, 
for projections $p,q\in M_\infty(A)$, we have $p\precsim q$ whenever $\tau(p)<\tau (q)$ for all $\tau\in T(A).$

\begin{cor}\label{cor:ComparisonProjections}
	If $G$ is a minimal almost finite groupoid, then $C_r^*(G)$ has comparison of projections.
\end{cor}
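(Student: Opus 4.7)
The plan is to combine Theorem \ref{Theorem:Traces detect positive elements in K_0} with Suzuki's stable rank one result to promote positivity in $K_0$ to actual Murray--von Neumann subequivalence of projections.

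First, I would verify that the hypothesis of Theorem \ref{Theorem:Traces detect positive elements in K_0} is satisfied in our setting. By fact (1) in the list of properties of almost finite groupoids, $M(G)\neq\emptyset$, and by \cite[Lemma~6.8]{MR2876963} (or directly from minimality), every invariant probability measure on $G^{(0)}$ has full support. Hence $G^{(0)}$ admits a full-supported invariant measure.

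Next, given projections $p,q\in M_\infty(C_r^*(G))$ satisfying $\tau(p)<\tau(q)$ for every $\tau\in T(C_r^*(G))$, consider $x:=[q]-[p]\in K_0(C_r^*(G))$. The assumption translates to $\tau_*(x)>0$ for all $\tau\in T(C_r^*(G))$, so Theorem \ref{Theorem:Traces detect positive elements in K_0} produces a projection $e\in M_\infty(C_r^*(G))$ with $x=[e]$; equivalently, $[q]=[p]+[e]=[p\oplus e]$ in $K_0(C_r^*(G))$.

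The final step is to pass from equality in $K_0$ to actual Murray--von Neumann equivalence. This is where the main theorem of \cite{Suzuki} enters: $C_r^*(G)$ has stable rank one, and stable rank one $C^*$-algebras enjoy cancellation of projections, i.e.\ the canonical map $V(C_r^*(G))\to K_0(C_r^*(G))$ is injective (see e.g.\ \cite{APT11}). Consequently $[q]=[p\oplus e]$ in $K_0$ implies $q\sim p\oplus e$ in $V(C_r^*(G))$, and therefore $p\precsim p\oplus e\sim q$, which is precisely comparison of projections.

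The proof is essentially assembly; the only genuine input beyond Theorem \ref{Theorem:Traces detect positive elements in K_0} is the cancellation phenomenon, for which stable rank one from \cite{Suzuki} is exactly what one needs. No real obstacle is expected.
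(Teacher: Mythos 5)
Your proof is correct and follows essentially the same route as the paper: apply Theorem \ref{Theorem:Traces detect positive elements in K_0} to $x=[q]-[p]$ to write $[q]=[p\oplus e]$, then use stable rank one from \cite{Suzuki} to cancel and obtain $q\sim p\oplus e\geq p$. Your extra check that minimality guarantees a full-supported invariant measure (so the theorem applies) is a detail the paper leaves implicit, but otherwise the arguments coincide.
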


\begin{proof}
If  $\tau(p)<\tau(q)$ for all $\tau\in T(C_r^*(G))$, then by Theorem 2.6 we have $[q]-[p]=[e]$ in $K_0(C_r^*(G))$ for some projection $e$. In other words, $[q]=[p\oplus e]$. 
Since $C_r^*(G)$ has stable rank one by \cite[Main Theorem]{Suzuki}, we have that $q$ is MvN-equivalent to $p\oplus e\geq p$, which concludes the proof.
\end{proof}

Let us now turn our attention to the real rank of $C_r^*(G)$. We need the following technical result inspired by \cite[Lemma~4.1]{MR2134336}.
\begin{lem}\label{Lemma:EnoughRoom}
	Let $G$ be an almost finite groupoid.
	For every finite subset $F\subseteq C_c(G)$ and $n\in\N$, there exist an elementary subgroupoid $K\subseteq G$ and a clopen subset $W\subseteq G^{(0)}$, such that for $p:=\chi_W\in C(G^{(0)})$ we have:
	\begin{enumerate}
		\item $f(1-p)$ and $(1-p)f$ are in $C_c(K)$ for all $f\in F$, and
		\item There exist $n$ mutually orthogonal projections $p_1,\ldots, p_n\in C(G^{(0)})$, such that $p_i\sim p$ in $C_r^*(G)$ for all $1\leq i\leq n$.
	\end{enumerate}	
\end{lem}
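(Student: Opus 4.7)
\emph{Strategy.} The two conditions can be arranged essentially independently: (1) only constrains how $W$ interacts with $K$ via the supports of elements in $F$, while (2) requires only that $\mu(W)$ be uniformly small in $\mu\in M(G)$. Both can be obtained from the machinery already used in Lemma~\ref{Lemma:CutdownBySmallProjections}, with the almost-finiteness parameter tuned as a function of~$n$.

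Cover the compact set $\bigcup_{f\in F}\supp(f)$ by finitely many compact open bisections $V_1,\dots,V_l$, set $V:=\bigcup V_i$, and apply almost finiteness to obtain an elementary $(V\cup V^{-1},\delta/(2l))$-invariant subgroupoid $K\subseteq G$ for a small $\delta>0$ to be determined. Put $W:=r(V\setminus K)\cup s(V\setminus K)$ (clopen) and $p:=\chi_W$. For $f\in F$ and $g\in\supp(f)\setminus K\subseteq V\setminus K$ we have $r(g),s(g)\in W$, so the pointwise formulas for convolution give $f(1-p),(1-p)f\in C_c(K)$, verifying~(1) exactly as in Lemma~\ref{Lemma:CutdownBySmallProjections}. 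The same estimate used there (namely \cite[Lemma~3.8]{Suzuki}) also yields $\mu(W)<\delta$ for every $\mu\in M(G)$.

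Now choose $\delta:=\frac{1}{2(n+1)}$, and construct the projections as characteristic functions $p_l=\chi_{W_l}$ inductively: at step $l\in\{1,\dots,n\}$, suppose pairwise disjoint clopen sets $W_1,\dots,W_{l-1}\subseteq G^{(0)}\setminus W$ with $1_{W_i}\sim 1_W$ in $C_r^*(G)$ have already been built. Equivalent projections yield the same value on every trace, so by Proposition~\ref{Prop:One-to-one Measures Traces} we have $\mu(W_i)=\mu(W)$ for all $\mu\in M(G)$. Setting $U_l:=G^{(0)}\setminus(W\cup W_1\cup\dots\cup W_{l-1})$ we obtain, uniformly in $\mu\in M(G)$,
\begin{equation*}
\mu(U_l)=1-l\,\mu(W)\geq\frac{n+2}{2(n+1)}>\frac{1}{2(n+1)}>\mu(W).
\end{equation*}
By the third fact listed at the end of Section~2.4 (namely \cite[Lemma~3.7]{ABBL}), $W$ is dynamically subequivalent to $U_l$; after refining the resulting bisections so that both source and range pieces are pairwise disjoint, one obtains compact open bisections $V_1',\dots,V_m'$ with $W=\bigsqcup_j s(V_j')$ and pairwise disjoint $r(V_j')\subseteq U_l$. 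Then $W_l:=\bigsqcup_j r(V_j')$ is clopen, and $v:=\sum_j 1_{V_j'}\in C_c(G)$ is a partial isometry with $v^*v=1_W$ and $vv^*=1_{W_l}$, so $1_{W_l}\sim 1_W$, completing the induction and giving~(2).

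The main obstacle is arranging the \emph{uniform strict} inequality $\mu(W)<\mu(U_l)$ simultaneously across all $\mu\in M(G)$ at every inductive step, which is what allows us to invoke the dynamical subequivalence criterion. Since $M(G)$ may be infinite-dimensional (and its extreme points may behave wildly), soft compactness/continuity arguments do not suffice; the crucial point is the quantitative Suzuki bound $\mu(W)<\delta=\frac{1}{2(n+1)}$, which decouples the estimate from $\mu$ and guarantees a positive gap independent of $\mu$ at each step $l\leq n$.
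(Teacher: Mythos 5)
Your proof is correct, but part (2) is obtained by a genuinely different argument from the one in the paper. Your treatment of part (1) and of the uniform bound $\mu(W)<\tfrac{1}{2(n+1)}$ coincides with ours (and with Lemma \ref{Lemma:CutdownBySmallProjections}). For part (2), the paper first proves a combinatorial claim: one can choose a sufficiently invariant elementary subgroupoid admitting a clopen castle, refined so that each level is either contained in or disjoint from $W$, in which every tower has more than $(n+1)$ times as many levels as those meeting $W$; this is established by contradiction, taking weak-$*$ cluster points of tower-averaged measures. The orthogonal copies of $p$ are then written down explicitly by translating $W$ up the towers via the bisections of the castle. You instead iterate the dynamical comparison result \cite[Lemma~3.7]{ABBL}: at step $l\le n$ the complement $U_l$ of the copies already built satisfies $\mu(U_l)>\tfrac{n+2}{2(n+1)}>\mu(W)$ uniformly over $M(G)$, so $W$ is dynamically subequivalent to $U_l$, and after the (routine) disjointification of the sources the union of the ranges is a clopen set $W_l$ with $1_{W_l}\sim 1_W$ in $C_r^*(G)$. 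This is shorter and avoids the castle bookkeeping, at the cost of outsourcing the compactness argument to \cite[Lemma~3.7]{ABBL}, whose proof is essentially the same cluster-point argument the paper carries out inside its claim; both routes produce projections in $C(G^{(0)})$, as required. Two minor remarks: the appeal to Proposition \ref{Prop:One-to-one Measures Traces} to get $\mu(W_i)=\mu(W)$ is unnecessary, since invariance of $\mu$ applied to the bisections $V_j'$ gives this directly; and your construction additionally yields $W_l\cap W=\emptyset$, which is not demanded by the statement but is harmless.
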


\begin{proof}
Let $F\subseteq C_c(G)$ be a finite subset and $n\in\N$. Consider the compact set $C:=\bigcup_{f\in F} \supp(f)\cup\supp(f^*)$. Find compact open bisections $V_1,\ldots, V_l$ such that $C\subseteq \bigcup_{i=1}^l V_i=:V$. Then we can use almost finiteness of $G$ to find a $(V\cup V^{-1},\frac{1}{2(n+1)l})$-invariant elementary subgroupoid $K\subseteq G$. Let $W:=r(V\setminus K)\cup s(V\setminus K)$ and $p:=\chi_W\in C_r^*(G)$. Then $p$ satisfies $(1)$, since for all $f\in F$ we can compute
$$((1-p) f)(g)=\sum\limits_{h\in G^{r(g)}}(1-p)(h)f(h^{-1}g)=(1-p)(r(g))f(g),$$
and the latter quantity can only be non-zero if $g\in K$ by the definition of $p$. Similar reasoning yields $f(1-p)\in C_c(K)$.

We now aim to show that $p$ also satisfies $(2)$. To this end we first show the following intermediate claim, which basically says, that in any given tower of a castle for $K$ that intersects $W$, we have enough levels to allow for at least $n$ pairwise disjoint copies of $W$ all equivalent in the dynamical sense to $W$. 

Before that, recall that by the same arguments as in the proof of Lemma \ref{Lemma:CutdownBySmallProjections} we have
\begin{equation}\label{Equation:W is small in measure}
\mu(W)<\frac{1}{n+1} \text{ for all } \mu\in M(G).
\end{equation}

\begin{claim*} There exists $0<\varepsilon<\frac{1}{n+1}$, a compact subset $L\subseteq G$ and a $(L,\varepsilon)$-invariant elementary subgroupoid $K'\subseteq G$ admitting a clopen castle
	 $$G^{(0)}=\bigsqcup_{i=1}^N\bigsqcup_{j=1}^{N_i}F_j^{(i)},\ \ K'=\bigsqcup\limits_{i=1}^N \bigsqcup\limits_{l,k=1}^{N_i} V_{k,l}^{(i)},$$
such that for all $1\leq i\leq N$ we have 
\begin{equation}\label{Equation:EnoughRoom}
N_i>(n+1)\cdot \big|\lbrace j\mid F_j^{(i)}\cap W\neq \emptyset\rbrace\big|.
\end{equation}
\end{claim*}
\begin{claimproof}

Suppose the claim is not true. Using almost finiteness, for every $0<\varepsilon<\frac{1}{n+1}$ and compact subset $L\subseteq G$, there exists a $m:=(L,\varepsilon)$-invariant elementary subgroupoid $K_m\subseteq G$ admitting a clopen castle. By refining the tower-decomposition according to \cite[Lemma~3.4]{ABBL}, we may as well assume that every level of every tower of the castle is either contained in or disjoint from $W$.
Since we assumed that the claim is not true, in each such clopen castle there must be at least one tower for which the inequality \ref{Equation:EnoughRoom} does not hold. Denoting the mentioned tower (and levels) by  $\mathcal F_m :=( F^{(i_m)}_{j},\theta^{(i_m)}_{j,k})_{1\leq j,k\leq N_m}$, let $x_m\in F^{(i_m)}_1$ and define the associated probability measure on $B\subseteq G^{(0)}$ by

$$\mu_m(B)=\frac{1}{N_m}\sum\limits_{j=1}^{N_m} \delta_{x_m}(\theta_{1,j}^{(i_m)}(B\cap F^{(i_m)}_{j})).$$

Then, using that inequality \ref{Equation:EnoughRoom} does not hold, for all $m$ we have that
$$\mu_m(W)\geq \sum_{j=1}^{N_m}\mu_m(F^{(i_m)}_{j}\cap W)=\frac{\big |\lbrace j\mid F^{(i_m)}_{j}\cap W\neq\emptyset\rbrace\big|}{N_m}\geq \frac{1}{n+1}.$$

Then, it can be verified that any weak-$\ast$-cluster point of the net $(\mu_m)_m$ is a $G$-invariant probability measure on $G^{(0)}$ (see the proof of \cite[Lemma~3.7]{ABBL} for more details). If $\mu\in M(G)$ is one of those, it also satisfies $\mu(W)\geq \frac{1}{n+1}$; thus, it contradicts the inequality \ref{Equation:W is small in measure}.
\end{claimproof}

Now suppose we are given a clopen castle as in the claim with associated partial homeomorphisms $\theta^{(i)}_{k,l}$ implemented by the bisections $V_{k,l}^{(i)}$. For ease of notation, let $l_i:=\big|\lbrace j\mid F_j^{(i)}\cap W\neq \emptyset\rbrace\big|$. We can relabel the levels if necessary to assume that $W$ sits at the bottom of each tower, i.e. $F_j^{(i)}\cap W=\emptyset$ if and only if $j>l_i$ for all $1\leq i\leq N$.
Then, for each $1\leq k\leq n$ let $$W_k:=\bigcup_{i=1}^N\bigcup_{j=1}^{l_i} \theta^{(i)}_{kl_i+j,j}(F_j^{(i)}\cap W).$$
Let $p_k:=\chi_{W_k}$ be the associated characteristic function. Then the $p_k$ are obviously all pairwise orthogonal and by construction $p_k\sim_G p_0=p$ for all $k\in\lbrace 0,\ldots, n\rbrace$. In particular, the $p_k$ are all Murray-von Neumann equivalent.
\end{proof}

We can now follow the proof of \cite[Theorem~4.6]{MR2134336} by using Lemma \ref{Lemma:EnoughRoom} and Theorem \ref{Theorem:Traces detect positive elements in K_0} to get the following:
\begin{thm}\label{Thm:RR0}
	If $G$ is a minimal almost finite groupoid, then $C_r^*(G)$ has real rank zero.
\end{thm}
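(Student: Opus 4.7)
The strategy is to follow Phillips' argument in \cite{MR2134336}, Theorem~4.6, using Lemma~\ref{Lemma:EnoughRoom} in place of Phillips' small projection lemma. Fix $a \in C_r^*(G)_{\mathrm{sa}}$ and $\varepsilon > 0$; the goal is to produce a self-adjoint element $\tilde a \in C_r^*(G)$ with finite spectrum satisfying $\|a-\tilde a\| < \varepsilon$.

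First, approximate $a$ by a self-adjoint $b \in C_c(G)$ with $\|a-b\| < \varepsilon/4$. Apply Lemma~\ref{Lemma:EnoughRoom} to $F = \{b\}$ with $n$ taken large in terms of $\varepsilon$ and $\|b\|$, producing an elementary subgroupoid $K \subseteq G$, a projection $p = \chi_W \in C(G^{(0)})$, and pairwise orthogonal projections $p_1, \ldots, p_n \in C(G^{(0)})$ each MvN-equivalent to $p$ in $C_r^*(G)$. By property (1) of that lemma, both $(1-p)b$ and $b(1-p)$ lie in $C_c(K)$, so the self-adjoint element $b - pbp = (1-p)b + pb(1-p)$ belongs to the AF algebra $C_r^*(K)$. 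Since AF algebras have real rank zero, pick a finite-spectrum self-adjoint $c \in C_r^*(K)$ with $\|c - (b - pbp)\| < \varepsilon/4$. Then $\|a - (c + pbp)\| < \varepsilon/2$.

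It remains to replace $c + pbp$ by a genuinely finite-spectrum element at the cost of a further $\varepsilon/2$. The plan is to step-function approximate the corner $pbp \in pC_r^*(G)p$ as $\|pbp - \sum_i \mu_i E_i\| < \varepsilon/4$ for pairwise orthogonal spectral projections $E_i \in pC_r^*(G)p$ and scalars $\mu_i \in \sigma(pbp)$. Since the $E_i$ are not a priori nicely positioned with respect to the spectral decomposition of $c$, the strategy is then to transfer their trace/$K_0$-data to new pairwise orthogonal projections $q_i \in C_r^*(G)$ that lie below $p + \sum_{j=1}^n p_j$ and commute with the finite-spectrum decomposition of $c$; the MvN-equivalent copies $p_1, \ldots, p_n$ are exactly what provide the "room" to carry out this repositioning, via Theorem~\ref{Theorem:Traces detect positive elements in K_0} to realize trace-positive $K_0$-classes as projections in $C_r^*(G)$ and Corollary~\ref{cor:ComparisonProjections} to arrange subequivalences at the level of projections. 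Assembling $\tilde a := c + \sum_i \mu_i q_i$ then yields a finite-spectrum self-adjoint within $\varepsilon$ of $a$.

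The main obstacle is precisely this last step: one must convert the trace-level approximation of $pbp$ into an actual norm approximation by pairwise orthogonal projections of $C_r^*(G)$ that are compatible with the finite-spectrum form of $c$. This is the delicate technical core where Theorem~\ref{Theorem:Traces detect positive elements in K_0} and Corollary~\ref{cor:ComparisonProjections} are indispensable, turning the trace-smallness of $p$ (guaranteed by Lemma~\ref{Lemma:EnoughRoom}) and the abundance of MvN-equivalent copies $p_1, \ldots, p_n$ into concrete projection-level identifications inside $C_r^*(G)$, and thereby bridging the gap left by the fact that the corner $pC_r^*(G)p$ is not itself AF.
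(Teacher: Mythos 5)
There is a genuine gap, and it sits exactly where you flag "the main obstacle": the step-function approximation of the corner. Writing $\|pbp-\sum_i\mu_i E_i\|<\varepsilon/4$ with pairwise orthogonal spectral projections $E_i\in pC_r^*(G)p$ presupposes that the spectral projections of $pbp$ (for a partition of its spectrum) actually lie in $pC_r^*(G)p$; this is precisely the real rank zero property of the full corner $pC_r^*(G)p$, i.e.\ essentially the statement being proved. Nothing in Lemma~\ref{Lemma:EnoughRoom}, Theorem~\ref{Theorem:Traces detect positive elements in K_0} or Corollary~\ref{cor:ComparisonProjections} produces such $E_i$: those results operate at the level of $K_0$-classes and Murray--von Neumann (sub)equivalence, which carry no norm information. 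In particular, even if the $E_i$ existed, replacing them by "repositioned" equivalent projections $q_i$ destroys the estimate $\|pbp-\sum_i\mu_i E_i\|<\varepsilon/4$, since MvN equivalence does not preserve distance to $pbp$. A further problem is the final assembly: your $c$ approximates $b-pbp=(1-p)b+pb(1-p)$, which has nonzero components in the off-diagonal corners $pC_r^*(G)(1-p)$ and $(1-p)C_r^*(G)p$, so $c$ and any finite-spectrum approximant of $pbp$ neither commute nor are orthogonal, and their sum need not have finite spectrum.

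The paper's proof avoids all of this by targeting the \emph{invertible} characterization of real rank zero rather than finite spectrum. Concretely: one first fixes functions $f,g$ detecting the spectrum of $a$ near $0$ and sets $\alpha=\inf_\tau\tau(g(a))>0$ (using simplicity); Lemma~\ref{Lemma:EnoughRoom} is applied with $n$ large relative to $\alpha$ (not merely to $\varepsilon$ and $\|a\|$) so that $\tau(p_0)$ is small against $\alpha$; then, crucially, $p_0$ is enlarged inside the AF algebra $C_r^*(K)$ to a projection $p$ that \emph{approximately commutes} with $a$ and satisfies $[p]\leq 2m[p_0]$ (\cite[Lemma~4.3]{MR2134336}) — this is what makes the off-diagonal terms $pa(1-p)$, $(1-p)ap$ small in norm. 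One then manufactures an orthogonal projection $q$ out of the functional calculus of $a_0=(1-p)a(1-p)$ with $\tau(q)>\tau(p)$ for all traces, deduces $p\precsim q$ from Theorem~\ref{Theorem:Traces detect positive elements in K_0} together with stable rank one (\cite[Main Theorem]{Suzuki}), and finally invokes \cite[Lemma~8]{MR1209829} to perturb $pap$ to an invertible selfadjoint element of the corner $(1-s)C_r^*(G)(1-s)$, with $sas$ handled inside the AF algebra. If you want to salvage your outline, you should switch to producing an invertible approximant and incorporate the approximate-commutation step; as written, the finite-spectrum route is circular at the corner $pC_r^*(G)p$ and the trace-level comparison cannot be upgraded to the required norm estimate.
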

\begin{proof}
Let $a\in C_r^*(G)$ be a selfadjoint element with $\norm{a}\leq 1$. We want to approximate $a$ by an invertible selfadjoint element. Invoking a short density argument, we may assume that $a\in C_c(G)$. Moreover, we assume $0\in sp(a)$ since for an invertible element $a$, there is nothing to prove.
Let $\varepsilon>0$ be given. Choose continuous functions $f,g:[-1,1]\rightarrow [0,1]$ such that $g(0)=1$, $fg=g$, and $\supp(f)\subseteq (-\frac{1}{9}\varepsilon,\frac{1}{9}\varepsilon)$.
Let $$\alpha:=\inf_{\tau\in T(C_r^*(G))} \tau(g(a)).$$
Since $G$ is minimal and almost finite, $C_r^*(G)$ is simple. Hence all traces on $C_r^*(G)$ are faithful. Combining this with the facts that $g(a)$ is a nonzero positive element, and $T(C_r^*(G))$ is weak* compact, we obtain $\alpha>0$.
Find $0<\delta<\frac{1}{9}\varepsilon$ from \cite[Lemma~4.4]{MR2134336} applied to $r=1$, $g$, and $\frac{1}{4}\alpha$. Now let $m\in\N$ with $m>2/\delta$. 
By Lemma \ref{Lemma:EnoughRoom} we can find an elementary subgroupoid $K\subseteq G$ and a projection $p_0\in C(G^{(0)})$, such that $a(1-p_0),(1-p_0)a\in C_c(K)$ and such that $p_0$ is Murray-von Neumann equivalent in $C_r^*(G)$ to more than $8m\alpha^{-1}$ mutually orthogonal projections in $C(G^{(0)})$. In particular $\tau(p_0)<\frac{1}{8}\alpha m^{-1}$ for all $\tau\in T(C_r^*(G))$.

Define $b=a-p_0ap_0$. Then $b$ is a selfadjoint element of $C_c(K)$ with $\norm{b}\leq 2$. By our choice of $\delta$, and the fact that $C_r^*(K)$ is an AF algebra, we can apply \cite[Lemma~4.3]{MR2134336} 
to $b$, $p_0$ and $\frac{1}{2}\delta$ to obtain a projection $p\in C_r^*(K)$ such that $\norm{pb-bp}<\delta$, $p_0\leq p$ and $[p]\leq 2m[p_0]$ in $K_0(C_r^*(G))$. 
Now $p$ commutes with $a - b = p_0ap_0$, so also $\norm{pa-ap}<\delta$. Furthermore, because $p\in C_r^*(K)$ and $p\geq p_0$, we get $(1-p)a,a(1-p)\in C_r^*(K)$.

Define $a_0:=(1-p)a(1-p)$. For every $\tau\in T(C_r^*(G))$, we have
$$\tau(p)\leq 2m\tau(p_0)<\frac{1}{4}\alpha.$$
By the choice of $\delta$ and using \cite[Lemma~4.4]{MR2134336}, we get
$$\tau(g(a_0))>\tau(g(a))-\tau(p)-\frac{1}{4}\alpha\geq \alpha-\frac{1}{4}\alpha-\frac{1}{4}\alpha=\frac{1}{2}\alpha\ \ \forall \tau\in T(C_r^*(G)).$$
Also $f(a_0)g(a_0)=g(a_0)$, and $C_r^*(K)$ is an AF algebra, so \cite[Lemma~3.2]{MR2134336} provides a projection $q\in C_r^*(K)$ such that
$$q\in \overline{g(a_0)C_r^*(K)g(a_0)},\ \ f(a_0)q=q,\text{ and }\norm{qg(a_0)-g(a_0)}<\frac{1}{8}\alpha.$$
Therefore we have the estimate $\norm{qg(a_0)q-g(a_0)}<\frac{\alpha}{4}.$
For all $\tau\in T(C_r^*(G))$, we have $\tau(qg(a_0)q)\leq \tau(q)$ because $\norm{g(a_0)}\leq 1$. Combining this with previous estimates, it follows that
$\tau(q)>\frac{1}{4}\alpha$.
Combining this with our estimate for $p$, we get that
$$\tau(p)<\tau(q)\text{ for all }\tau\in T(C_r^*(G)).$$
It follows from Theorem \ref{Theorem:Traces detect positive elements in K_0}, that $[q]-[p]=[e]$ for some projection $e\in M_\infty(C_r^*(G))$. Since $C_r^*(G)$ has stable rank one (and thus cancellation of projections) 
by \cite[Main Theorem]{Suzuki}, we have $q\sim p+e$, which means $p\precsim q$ in $C_r^*(G)$.
Since $a_0p=pa_0=0$, we conclude that $p$ and $q$ are orthogonal.
By \cite[Lemma~4.5]{MR2134336} applied to $a_0$, $\lambda_0 = 0$, $g$, and $q$ we have
\begin{equation*}
    \norm{qa_0-a_0q}<\frac{2\varepsilon}{9} \text{  and  }\norm{qa_0q}<\frac{\varepsilon}{9}.
\end{equation*}
    Consider now $s:=1-p-q$. Then
    $$a-(sas+pap)=pa(1-p)+(1-p)ap+qa_0s+sa_0q+qa_0q.$$
Therefore, using that $qs=0$, we have
\begin{align*}
    \norm{a-(sas+pap)}&\leq 2 \norm{pa-ap}+2\norm{qa_0-a_0q}+\norm{qa_0q}\\
    & < 2\delta + \frac{4\varepsilon}{9}+\frac{\varepsilon}{9}< \frac{7\varepsilon}{9}.
\end{align*}
Now if $B=(1-s)C_r^*(G)(1-s)$, then $pap$ is a selfadjoint element in $pBp=pC_r^*(G)p$ and we have $p\precsim q=(1-s)-p=1_B-p$. Hence \cite[Lemma~8]{MR1209829} provides us with an invertible selfadjoint element $b\in B$ such that $\norm{b-pap}<\frac{\varepsilon}{9}$. Moreover, $sas=s(1-p)as\in sC_r^*(K)s$, which is an AF algebra, so there is an invertible selfadjoint element $c\in sC_r^*(K)s$ such that $\norm{c-sas}<\frac{\varepsilon}{9}$. It follows that $b+c$ is an invertible selfadjoint element in $C_r^*(G)$ such that
\begin{align*}
    \norm{a-(b+c)} & \leq \norm{a-(sas+pap)}+\norm{b-pap} + \norm{c-sas}\\
    & < \frac{7\varepsilon}{9}+\frac{\varepsilon}{9}+\frac{\varepsilon}{9}=\varepsilon,
\end{align*}
which completes the proof.
\end{proof}

Stable rank one for C*-algebras associated to minimal almost finite groupoids (\cite[Main Theorem]{Suzuki}) is a crucial ingredient in the proof of Theorem \ref{Theorem:Traces detect positive elements in K_0} and Theorem \ref*{Thm:RR0}. Notice that this strategy does not hold for general non-minimal almost finite groupoids since they usually do not have stable rank one (see e.g. \cite{LW18,MR989097,BNS} for examples and further results in this direction).

Finally, we are ready to provide a proof of the main theorem by combining the above results:

\begin{proof}[Proof of Theorem \ref{TheoremA}]
First of all, we notice that $C_r^*(G)$ is a unital simple $C^*$-algebra with stable rank one and real rank zero (see \cite[Remark~6.6]{MR2876963}, \cite[Corollary~3.14]{boenicke_li_2018}, \cite[Main Theorem]{Suzuki} 
and Theorem \ref{Thm:RR0}). Therefore, its Cuntz semigroup is ${\rm Cu}(C_r^*(G))\cong\Lambda_\sigma(V(C_r^*(G)))$ (\cite[Theorem 6.4]{abp11}), where the latter stands for the countably generated intervals 
in the projection monoid. Recall that the isomorphism is described via $\langle a\rangle\mapsto I(a):=\{[p]\in V(C_r^*(G))\mid p\in \overline{aM_\infty(C_r^*(G))a}\},$ 
and that any interval $I(a)$ has an increasing countable cofinal subset of projections $\{[ p_n ]\}$ in $V(C_r^*(G))$ such that $\langle a\rangle=\sup ([p_n]) \text{ in }{\rm Cu}(C_r^*(G))$.
	
	Let us now fix $\langle a\rangle,\langle b\rangle\in {\rm Cu}(C_r^*(G)) $ such that $d_\tau(a)< d_\tau(b)$ for all $\tau\in T(C^*_r(G))$. 
	Let $\langle a\rangle=\sup ([p_n])$ and $\langle b\rangle=\sup ([q_m])$, where all $[p_n], [q_m]$ in $V(C_r^*(G))$. 
	Given $\tau\in T(C^*_r(G))$ and $n\in \N$, it is clear by construction that $\tau(p_n)=d_\tau (p_n)<d_\tau(b)$. 
	Hence, there is $N(n,\tau)\in\mathbb N$ such that $\tau(p_n)<\tau(q_{N(n,\tau)})$. Now, using that $T(C^*_r(G))$ is compact under the weak-* topology, 
	we find $N(n)\in \N$ such that $\tau(p_n)<\tau(q_{N(n)})$ for all $\tau\in T(C^*_r(G))$. By Corollary \ref{cor:ComparisonProjections}, one obtains 
	that $[p_n]\leq [q_{N(n)}]$. As this can be done for all $n\in \mathbb N$, $C_r^*(G)$ has strict comparison.
	
For the second statement, $C_r^*(G)$ is a separable non-elementary unital simple $C^*$-algebra with stable rank one. Hence, this part follows from \cite[Corollary~8.12]{1711.04721}.
	\end{proof}
\begin{rem}\label{non-exact}
It is worth noticing that $C_r^*(G)$ in Theorem~\ref{TheoremA} may not be nuclear in general, as Gabor Elek constructed non-amenable minimal almost finite ample groupoids in \cite[Theorem~6]{1812.07511}. In \cite[Corollary~4.12]{ABBL}, the same four authors of this paper construct an almost finite ample principal non-minimal groupoid $G$ from coarse geometry such that $G$ is not even a-T-menable. 

Let us finish providing here an almost finite ample principal non-minimal groupoid $G$ such that $C^*_r(G)$ is not exact. Indeed, take $X$ to be one of the expanders from \cite[Corollary~3]{MR2990566} such that its uniform Roe algebra $C_u^*(X)$ is not ($K$)-exact. Then $Y = X\times \N$ defined as in \cite[Proposition~4.10]{ABBL} contains $X$ as a subspace by construction, and $Y$ admits tilings of arbitrary
invariance. Hence, the associated coarse groupoid $G(Y)$ is almost finite by \cite[Theorem~4.5]{ABBL}. On the other hand, $C_u^*(X)$ is a $C^*$-subalgebra of $C_r^*(G(Y))=C_u^*(Y)$. Since exactness passes to $C^*$-subalgebras, $C_r^*(G(Y))$ cannot be exact as desired.
\end{rem}

\bibliographystyle{plain}
\bibliography{References}
\end{document}